\newtheorem{theorem}{Theorem}[section]
\newtheorem*{theorem*}{Theorem}
\newtheorem{lemma}{Lemma}[section]
\newtheorem*{proposition*}{Proposition}
\newtheorem{corollary}{Corollary}[section]
\theoremstyle{definition}
\newtheorem{definition}{Definition}[section]
\newtheorem{remark}{Remark}[section]
\newtheorem*{conjecture*}{Conjecture}
\newtheorem*{notation*}{Notation}
\newtheorem{example}{Example}[section]
\numberwithin{equation}{section}
\def\1{1\kern-.3em1}
\newcommand{\Z}{{\mathbb Z}}
\newcommand{\Q}{{\mathbb Q}}
\newcommand{\C}{{\mathbb C}}
\newcommand{\F}{{\mathbb F}}
\newcommand{\beq}{\begin{eqnarray}}
\newcommand{\eeq}{\end{eqnarray}}
\newcommand{\baln}{\begin{aligned}}
\newcommand{\ealn}{\end{aligned}}
\newcommand{\bmtx}{\begin{bmatrix}}
\newcommand{\emtx}{\end{bmatrix}}
\newcommand{\brmk}{\begin{remark}}
\newcommand{\ermk}{\end{remark}}
\begin{document}
\thanks{Keywords and phrases. Polynomial equivalence; Double generalized centralizer; Clifforder; Quasi-commutativity; Roots of unity; Potter's theorem}
\thanks{2020 Mathematics Subject Classification. Primary 15A18, 15A27.}
\thanks{Hechun Zhang was partially supported by National Natural Science Foundation of China grants No. 12031007 and No. 11971255}
\thanks{\textbf{Disclosure statement.} No potential conflict of interest was reported by the author(s).}
\title{Centralizers, Clifforders, Polynomial Equivalence and $\omega$-equivalence of Matrices}
\author[Hechun Zhang]{Hechun Zhang}
\address{Department of Mathematical Sciences, Tsinghua University, Beijing, 100084, P. R. China.}
\email{hczhang@tsinghua.edu.cn}

\author[Chengyi Zhu]{Chengyi Zhu}
\address{Department of Mathematics, University of Wisconsin--Madison, Madison, WI 53706, USA.}
\email{czhu284@wisc.edu}
\date{\today}
\begin{abstract}
This paper is devoted to the study of the centralizer and the clifforder of matrices over a field $\mathbb{F}$ of characteristic zero, together with the quasi-commutative relations between them. Several new notions are introduced, including polynomial equivalence, odd polynomial equivalence, $q$-polynomial equivalence, the clifforder of a matrix, balanced matrices, and $\omega$-equivalence. We also define the $k$-th annihilator of a matrix and the $k$-fold composition of the adjoint operator. Using these concepts, we extend the classical double centralizer theorem to a broader framework, showing that the classical case arises as a special instance. For balanced (including nilpotent) matrices, we prove that their clifforders coincide if and only if they are odd polynomial equivalence. Moreover, we provide another proof of a theorem of H.\ S.\ A.\ Potter by using quasi-commutative relations defined by a primitive $q$-th root of unity $\omega$, as well as another proof of several further known results on $\omega$-equivalence.
\end{abstract}

\maketitle
\section{Introduction\label{Introduction}}
Commuting and anti-commuting relations are the most basic and fundamental algebraic structures in algebra. Among commuting relations, for example, the study of matrix centralizers has a long history, dating back to the work of F. G. Frobenius in the 1880s. However, there are still questions related to these basic relations that remain not well understood.

In linear algebra, the \emph{centralizer} of a matrix $A$, denoted by $\mathcal{C}(A)$, is the set of all matrices that commute with $A$. That is,
$$\mathcal{C}(A)=\{B\in M_n(\mathbb{F})\mid AB=BA \}.$$ 
\par The centralizer of a matrix provides valuable information about its structure and algebraic properties. A classical problem in matrix theory concerns the comparison of centralizers of different matrices, which is equivalent to studying their associated linear transformations. The theory of a single linear transformation of a finite-dimensional vector space over a field has been thoroughly developed in many standard texts on linear algebra and matrix theory. As shown in \cite{Tischel}, the centralizer of a linear map is a division ring if and only if its characteristic polynomial is irreducible. In group theory, the centralizer is likewise a fundamental object of study. For instance, Serre \cite{Serre} examined the centralizers of involutions in finite Coxeter groups, while Carter \cite{Carter} investigated those of semisimple elements in a connected simple algebraic group $G$. These results were instrumental in determining the degrees of certain families of irreducible representations of $G$. In Lie theory, the centralizer of a subset $A$ of a linear group $G$ has also been extensively studied \cite{Wulf}. A well-known result states that $\mathcal{C}(A)=\mathbb{F}[A]$, the set of all polynomials in $A$ with coefficients in $\mathbb{F}$, if and only if the minimal polynomial of $A$ coincides with its characteristic polynomial. Motivated by these observations, we introduce the notions of polynomial equivalence, odd polynomial equivalence, and $q$-polynomial equivalence of matrices, and relate these concepts to centralizers, clifforders, and $\omega$-equivalence of matrices, where some of these notions were previously studied in \cite{Dolinar2019,Dolinar2018}. Most importantly, in this paper we extend the well-known double centralizer theorem to a broader setting (see Theorem~\ref{alph*2}). To the best of our knowledge, these notions, together with the corresponding extensions of the well-known double centralizer theorem and the alternative proofs of several important previously known theorems, are new.

\par To establish the connection between centralizers and polynomial equivalence, we recall the classical double centralizer theorem, which describes their relationship and will be invoked repeatedly throughout this paper. The theorem asserts that for any matrix $A$ over a field $\F$, $\mathcal{C}(\mathcal{C}(A))=\F[A]$, where $\F[A]$ denotes the unital subalgebra of $M_n(\F)$ generated by $A$ (see, for example, \cite[p.~113, Corollary~1]{Jacobson} and \cite[p.~106, Theorem~2]{Wedderburn}). The following statement is a direct corollary of this well-known theorem and the Cayley-Hamilton theorem.
\begin{theorem}\label{alph*}
    For $A, B \in M_n(\mathbb{F})$, we have $\mathcal{C}(A) = \mathcal{C}(B)$ if and only if there exist polynomials $f(x), g(x) \in \mathbb{F}[x]$ of degree at most $n-1$ such that
$A = f(B)$ and $B = g(A)$.
\end{theorem}
\par Clifford algebra, also known as geometric algebra, has broad applications in geometry, calculus (particularly in exterior differentiation), differential geometry, Lie theory, and theoretical physics. For instance, Gu \cite{Gu} discussed its applications in physics, with particular emphasis on the differential connection of fields and torsion. The \emph{clifforder} of a matrix $A$, denoted by $\mathcal{C}\mathcal{L}(A)$, is defined as the set of all matrices that anti-commute with $A$, namely
$$\mathcal{C}\mathcal{L}(A)=\{B\in M_n(\mathbb{F})\mid AB=-BA\}.$$ 
\par Notice that $\mathcal{C}\mathcal{L}(A)$ is a $\mathcal{C}(A)$-bimodule which is cyclic if $\mathcal{C}\mathcal{L}(A)$ contains an invertible matrix as we will see in Section  \ref{clifforder}.
\par In 1950, H.~S.~A.~Potter, a Scottish mathematician, published a note investigating pairs of matrices $A$ and $B$ satisfying $AB = \omega BA$. In that work, he referred to such pairs of complex $n \times n$ matrices as \emph{quasi-commutative}, a term later refined in~\cite{Holtz} as \emph{$\omega$-commutative}. In his original paper, Potter established a fundamental result on such matrices (see Theorem~\ref{Potter}). In this paper, we present a completely new yet elementary proof of Potter’s theorem and establish several further results concerning quasi-commutative relations between matrices. Moreover, inspired by Potter's work, we provide new proofs of several notable results concerning quasi-commutativity. To set the stage for the following key definition, let $\omega$ denote a primitive $p$-th root of unity. For a fixed matrix $A$, the $\omega$-centralizer of $A$ is defined as
$$
\mathcal{C}_\omega(A) := \{ X \in M_n(\mathbb{F}) \mid AX = \omega XA \}.
$$
In particular, $\mathcal{C}_\omega(A)$ is a vector subspace of $M_n(\mathbb{F})$.
\par The arrangement of this article is as follows. In Section \ref{Preliminaries}, we establish notation and provide basic results which will be used repeatedly throughout this paper and introduce the concept of polynomial equivalence, odd polynomial equivalence, $q$-polynomial equivalence, balanced matrix and $\omega$-equivalence. In Section \ref{polynomial equivalence}, we introduce several theorems and examples to provide a deeper understanding of the structural properties of polynomial equivalence. In Section~\ref{strong relation}, we establish a stronger connection between matrix centralizers and polynomial equivalence (see Theorem~\ref{alph*2}), formulated in terms of iterated compositions of the linear map $\mathrm{ad}_A$ associated with a given matrix $A$. We also interpret the $\omega$-centralizer of a matrix in terms of systems of linear equations and the Kronecker tensor product of matrices, thereby establishing four equivalent statements concerning matrix centralizers (see Theorem \ref{central}). In Section \ref{clifforder}, we show that the clifforders of balanced matrices $A$ and $B$ coincide if and only if they are odd polynomial equivalence (see Theorem~\ref{4.2}), and we provide several illustrative examples. Section~\ref{quasicommutative} investigates several properties of quasi-commutative matrices, providing a straightforward alternative approach to a classical theorem of Potter (see Theorem \ref{Potter}), and offering new insights into the relationship between $\omega$-equivalence and $q$-polynomial equivalence. Section \ref{conclusion} concludes the paper with a summary of its key points.
\section{\label{Preliminaries}Preliminaries}
\par In this article, we will focus on polynomial equivalence among matrices, relating  centralizer, clifforder and $\omega$-equivalence of matrix to polynomial equivalence, odd polynomial equivalence and $q$-polynomial equivalence respectively.
\par Let $A$ be an $n\times n$ matrix over a field $\mathbb{F}$ of characteristic zero. Throughout this paper, $J_n(\lambda)$ denotes the $n\times n$ Jordan block with eigenvalue $\lambda$, while $m_A(x)$ and $p_A(x)$ represent the minimal and characteristic polynomials of $A$, respectively. The set of all $n\times n$ matrices over $\mathbb{F}$ is denoted by $M_n(\mathbb{F})$, and $GL_n(\mathbb{F})$ represents the group of all invertible $n\times n$ matrices over the field $\mathbb{F}$. We will use $I_n$ to denote the $n\times n$ identity matrix, and $O$ the zero matrix with appropriate dimensions. Let $A \in M_n(\mathbb{F})$. Let $ \mathbb{F}[A] $ denote the set of all polynomials of $A$ with coefficients in $ \mathbb{F}$, i.e.,
$$
\mathbb{F}[A] = \{ p(A) \mid p(x) \in  \mathbb{F}[x] \}.
$$
	  \par The following standard result can be found in many linear algebra textbooks, for instance, see \cite{Lax}. 
	\begin{lemma}\label{tj}
		\label{lem2}
		Let $A$, $B\in M_{m\times n}(\mathbb{F})$. Then the linear equations $Ax=0$ and $Bx=0$ share the same solution set if and only if there exists $P\in GL_{m}(\mathbb{F})$ such that $B=PA$.
	\end{lemma}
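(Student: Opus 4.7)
The ``if'' direction is immediate: when $B = PA$ with $P \in GL_m(\mathbb{F})$, the solution sets of $Ax = 0$ and $Bx = PAx = 0$ coincide because $P$ is invertible.

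For the converse, the plan is to translate the hypothesis $\ker A = \ker B$ (as subspaces of $\mathbb{F}^n$) into an equality of row spaces $\mathrm{Row}(A) = \mathrm{Row}(B)$. This uses the standard fact that, for any $M \in M_{m \times n}(\mathbb{F})$, the row space $\mathrm{Row}(M)$ is the annihilator of $\ker M$ under the standard bilinear pairing on $\mathbb{F}^n$: each row of $M$ pairs to zero with every vector in $\ker M$, so $\mathrm{Row}(M) \subseteq (\ker M)^{\perp}$, and the rank--nullity identity $\dim \mathrm{Row}(M) + \dim \ker M = n$ forces equality. Hence $\ker A = \ker B$ implies $\mathrm{Row}(A) = \mathrm{Row}(B)$.

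With row spaces equal, I would invoke uniqueness of the reduced row echelon form. Gauss--Jordan elimination supplies, for each of $A$ and $B$, an invertible matrix $Q_i \in GL_m(\mathbb{F})$ (a product of elementary row operation matrices) such that $Q_1 A$ and $Q_2 B$ are both in RREF. Since the RREF of a matrix depends only on its row space, one has $Q_1 A = Q_2 B$, and therefore $P := Q_2^{-1} Q_1 \in GL_m(\mathbb{F})$ satisfies $B = PA$. The only mildly non-trivial point in the argument is the kernel-to-row-space passage; everything else is routine bookkeeping with elementary row operations, so no serious obstacle is anticipated.
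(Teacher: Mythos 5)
Your proof is correct. Note that the paper does not prove this lemma at all: it is stated as a standard fact with a citation to a linear algebra textbook (Lax), so there is no argument in the paper to compare against. Your route --- passing from $\ker A=\ker B$ to $\mathrm{Row}(A)=\mathrm{Row}(B)$ via the annihilator/rank--nullity argument, and then invoking uniqueness of the reduced row echelon form to produce $P=Q_2^{-1}Q_1$ --- is a complete and standard way to establish the result, and both steps are sound over an arbitrary field.
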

\par In this article, we focus on anti-commuting relations and ``commutators up to $\omega$'' among matrices. We now introduce the notions of polynomial equivalence, odd polynomial equivalence, and $q$-polynomial equivalence of matrices, which serve as key tools in the study of matrix centralizers.\begin{definition}
\label{poly equiv}
Let $A$ be a unital associative algebra over a field $\mathbb{F}$, and let $y,z \in A$. The elements $y$ and $z$ are said to be \emph{polynomial equivalence}, or \emph{$p$-equivalence} for short, if there exist polynomials $f(x), g(x) \in \mathbb{F}[x]$ such that $y = f(z),z = g(y)$. For a positive integer $q$, $y$ and $z$ are said to be \emph{$q$-polynomial equivalence}, or simply \emph{$q$-equivalence}, if there exist polynomials
$$
j(x) = \sum_{i\ge 0} a_{qi+1} x^{qi+1},~k(x) = \sum_{j\ge 0} b_{qj+1} x^{qj+1} \in \mathbb{F}[x]
$$
such that
$$y = k(z),~ z = j(y).$$
In particular, the case $q=2$ corresponds to \emph{odd polynomial equivalence}, or simply \emph{odd $p$-equivalence}.
\end{definition}
\begin{definition} Let $A\in M_n(\mathbb{F})$. If $\lambda_0$ and $-\lambda_0$ are both eigenvalues of $A$, one calls $\lambda_0$ as a \emph{balanced eigenvalue} of $A$. And a monic polynomial $f(x)\in\mathbb{F}\left[ x\right]$ is called a \emph{balanced polynomial} if $$f(-x)=(-1)^{degf(x)}f(x).$$
\end{definition}

\begin{example}
	The number $0$ is a balanced eigenvalue of $A$ if and only if $\det A = 0$. Both $x^n$ and $(x-a)(x+a)$ are balanced polynomials.
\end{example}

\begin{definition}	
	\label{balanced matrix}
	A matrix $A \in M_n(\mathbb{F})$ is called a \emph{balanced matrix} if all of its invariant factors are balanced polynomials.
\end{definition}

\begin{remark}
	\label{balanced remark}
From Definition \ref{balanced matrix} it follows immediately that every nilpotent matrix is balanced. 
Furthermore, an $n\times n$ matrix $M$ is balanced if and only if, for each eigenvalue $\lambda$ and block size $m$, 
the number of Jordan blocks of $M$ of size $m$ corresponding to $\lambda$ coincides with the number corresponding to $-\lambda$. 
Equivalently, $M$ and $-M$ have the same Jordan block decomposition.
\end{remark}
\begin{theorem}
	\label{4.7}
	 Let $A\in M_n(\mathbb{F})$. Then $\mathcal{C}\mathcal{L}(A)$ contains an invertible matrix if and only if $A$ is balanced.
\end{theorem}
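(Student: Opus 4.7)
The plan is to reduce the statement to the question of whether $A$ is similar to $-A$. The key observation is that for an invertible matrix $B$, the anti-commutation relation $AB=-BA$ is equivalent to $B^{-1}AB=-A$. Hence $\mathcal{C}\mathcal{L}(A)$ contains an invertible matrix if and only if $A$ is similar to $-A$ over $\mathbb{F}$ (the witnessing conjugator itself being the desired invertible element of $\mathcal{C}\mathcal{L}(A)$).

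It therefore suffices to prove that $A$ is similar to $-A$ if and only if $A$ is balanced. For this I would pass to the Jordan canonical form over the algebraic closure $\overline{\mathbb{F}}$, which is legitimate because similarity over $\mathbb{F}$ and over $\overline{\mathbb{F}}$ coincide for matrices with entries in $\mathbb{F}$ (a standard consequence of uniqueness of invariant factors). The preparatory step is to show that for each $\lambda\in\overline{\mathbb{F}}$ and each $m\ge 1$, the block $-J_m(\lambda)$ is similar to $J_m(-\lambda)$. Writing $J_m(\lambda)=\lambda I+N$ with $N$ the standard nilpotent block, this follows by conjugating $N$ to $-N$ through the diagonal matrix $\operatorname{diag}(1,-1,1,-1,\ldots)$. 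Consequently, the Jordan form of $-A$ is obtained from that of $A$ by replacing each eigenvalue $\lambda$ by $-\lambda$ while preserving all block sizes. Since similarity is determined by the multiset of Jordan blocks, $A$ and $-A$ are similar if and only if, for every $\lambda$ and every $m$, the number of $J_m(\lambda)$-blocks of $A$ equals the number of $J_m(-\lambda)$-blocks. By Remark \ref{balanced remark}, this is precisely the balanced condition.

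Combining the two equivalences completes the proof. I expect no serious obstacle; the only mildly delicate point is the descent of similarity from $\overline{\mathbb{F}}$ to $\mathbb{F}$, but this is classical. An alternative route avoiding $\overline{\mathbb{F}}$ entirely would work directly with invariant factors: the invariant factors of $-A$ are the monic normalizations of $d_i(-x)$, where $d_1(x)\mid\cdots\mid d_k(x)$ are the invariant factors of $A$; then $A$ is balanced, i.e.\ each $d_i(-x)=(-1)^{\deg d_i}d_i(x)$, if and only if the invariant factors of $A$ and $-A$ coincide, if and only if $A$ and $-A$ are similar.
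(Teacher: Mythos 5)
Your proposal is correct and follows essentially the same route as the paper: both reduce the existence of an invertible element of $\mathcal{C}\mathcal{L}(A)$ to the similarity $A\sim -A$ via $B^{-1}AB=-A$, and then identify that similarity with the balanced condition. The only difference is that the paper treats the equivalence ``balanced $\iff$ $A\sim-A$'' as immediate from the definition, whereas you supply the justification (via Jordan blocks over $\overline{\mathbb{F}}$ with descent, or more directly via the invariant factors of $-A$ being the monic normalizations of $d_i(-x)$), which is a welcome addition rather than a deviation.
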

\begin{proof}
	By Definition \ref{balanced matrix}, $A$ is balanced if and only if it is similar to $-A$. 
	Equivalently, there exists $P \in GL_n(\mathbb{F})$ such that $A = P^{-1}(-A)P$, which can be rewritten as $AP + PA = O$. Hence, $P \in \mathcal{C}\mathcal{L}(A) \cap GL_n(\mathbb{F})$.
\end{proof}
\par In the computation of the clifforder of a matrix, some fixed notations are given which will be used repeatedly throughout this paper, sometimes without reference. The $n\times n$ diagonal matrix $\operatorname{diag}(1, -1, 1,\cdots, (-1)^{n-1})$ is denoted as $K_n^{(1)}$. The following matrices are defined as 
$$
K_n^{(2)} = 
\begin{bmatrix}
	0 & 1 &  &  &  &  \\
	& 0 & -1 &  &  \\
	&  & \ddots & \ddots &  \\
	&  &  & 0 & (-1)^{n-2} \\
	&  &  &  & 0
\end{bmatrix},
~
K_n^{(3)} =
\begin{bmatrix}
0 & 0 & 1 &  &  &  \\
& 0 & 0 & -1 &  &  \\
&  & \ddots & \ddots & \ddots &  \\
&  &  & 0 & 0 & (-1)^{n-3} \\
&  &  &  & 0 & 0 \\
&  &  &  &  & 0
\end{bmatrix}.
$$
\par By analogy, we can define the $n\times n$ matrices $K_n^{(i)}$ for $1 \leq i \leq n$. Furthermore, we define the linear combination  
$$
K_n[a_1, a_2, \ldots, a_n] = a_1 K_n^{(1)} + a_2 K_n^{(2)} + \cdots + a_n K_n^{(n)}.
$$
\begin{definition}
	For $A\in  M_n(\mathbb{F})$, define that \emph{the alternative double cover} of $A$ as
$$\tilde{A}=\begin{bmatrix}A&O\\O&-A\end{bmatrix}.$$
\end{definition}
\par Before proceeding, we introduce the following result, which can be verified directly and reveals a close relationship between the centralizer and the clifforder.

\begin{lemma}
\label{4.1}
For $\tilde{A}$, the alternative double cover of $A$, we have
$$
\mathcal{C}(\tilde{A})=
\begin{bmatrix}
\mathcal{C}(A)&\mathcal{C}\mathcal{L}(A)\\
\mathcal{C}\mathcal{L}(A)&\mathcal{C}(A)
\end{bmatrix},~
\mathcal{C}\mathcal{L}(\tilde{A})=
\begin{bmatrix}
\mathcal{C}\mathcal{L}(A)&\mathcal{C}(A)\\
\mathcal{C}(A)&\mathcal{C}\mathcal{L}(A)
\end{bmatrix}.
$$
\end{lemma}
\par For a balanced matrix $A$, Theorem~\ref{4.7} ensures that $\mathcal{C}\mathcal{L}(A)$ contains an invertible element. 
Moreover, such an invertible matrix can be described explicitly. 
This situation naturally divides into two distinct cases for further analysis. 
\par \textbf{Case 1.} If all eigenvalues of $A$ are zero, then $A$ is nilpotent. 
We may assume that $A$ is in its Jordan canonical form, that is,
$$
A=\operatorname{diag}(J_{n_1}(0), J_{n_2}(0), \ldots, J_{n_s}(0)).
$$
It is straightforward to verify that the invertible matrix
$$
K=\operatorname{diag}(K_{n_1}^{(1)}, K_{n_2}^{(1)}, \ldots, K_{n_s}^{(1)})
\in \mathcal{C}\mathcal{L}(A).
$$

\par \textbf{Case 2} If $\lambda_i$ $(1\leq i\leq s)$ being balanced eigenvalues of $A$ satisfying $\lambda_1\cdots\lambda_s\neq 0$, since $P^{-1}\mathcal{C}\mathcal{L}(A)P=\mathcal{C}\mathcal{L}(P^{-1}AP)$, we may assume that $A=J_{1}(\lambda_1)\oplus U_{1}(\lambda_1)\oplus \cdots \oplus J_{s}(\lambda_s)\oplus U_{s}(\lambda_s) \oplus J_{m_1}(0)\oplus \cdots \oplus J_{m_t}(0)$ where $U_i(\lambda_i)=-J_i(\lambda_i)$ $(1\leq i\leq s)$. Then according to Lemma \ref{4.1}, we have that
$$\begin{bmatrix}
	\mathcal{C}\mathcal{L}(J_1(\lambda_1))&\mathcal{C}(J_1(\lambda_1))& & &\\
	\mathcal{C}(J_1(\lambda_1))&\mathcal{C}\mathcal{L}(J_1(\lambda_1))& && \\
	&&\ddots &&&&\\
	&& &\mathcal{C}\mathcal{L}(J_s(\lambda_s))&\mathcal{C}(J_s(\lambda_s)) &&&\\
	&& &\mathcal{C}(J_s(\lambda_s))&\mathcal{C}\mathcal{L}(J_s(\lambda_s))&&&\\
	&&&&&K_{m_1}^{(1)}&&\\
	&&&&&&\ddots&\\
	&&&&&&&K_{m_t}^{(1)}\\
	
\end{bmatrix}\subseteq \mathcal{C}\mathcal{L}(A).$$
\par Hence $$\begin{bmatrix}
	O&\mathbb{F}[J_1(\lambda_1)]&  \\
	\mathbb{F}[J_1(\lambda_1)]&O&  \\
	&&\ddots &&\\
	&& &O&\mathbb{F}[J_s(\lambda_s)] &\\
	&& &\mathbb{F}[J_s(\lambda_s)]&O&\\
	&&&&&K_{m_1}^{(1)}&&\\
	&&&&&&\ddots&\\
	&&&&&&&K_{m_t}^{(1)}\\
	
\end{bmatrix}\subseteq \mathcal{C}\mathcal{L}(A)$$
which explicitly identifies an invertible matrix contained in $\mathcal{C}\mathcal{L}(A)$.

\par The following definition will play an essential role in Section~\ref{quasicommutative}.  
\begin{definition}
	Let $A, B \in M_n(\mathbb{F})$. We say that $A$ and $B$ are \emph{$\omega$-equivalence} if $\mathcal{C}_\omega(A) = \mathcal{C}_\omega(B)$.
\end{definition}
\section{\label{polynomial equivalence}Polynomial Equivalence among Matrices and Other Algebraic Elements}
\par In this section, we present several theorems, lemmas, and examples concerning the concept of polynomial equivalence. The following examples can be verified thorough a direct computation.
\begin{example}
	For $x, y\in A$. If $x=ay+bI_n$ with $a\in \mathbb{F}^*, b\in\mathbb{F}$, then $x$ and $y$ are polynomial equivalence.
\end{example}
\begin{example}
	Suppose $f(x), g(x)\in \mathbb F[x]$ are both polynomials with positive degrees. The  polynomials $f(x), g(x)$ are polynomial equivalence if and only if $f(x)=ag(x)+b$ for some $a, b\in \mathbb F$ with $a\ne 0$.
\end{example}
\begin{example}
	Let $x, y \in \mathbb{C}$ be algebraic numbers. Then $x$ and $y$ are polynomial equivalence if and only if $\mathbb{Q}(x)=\mathbb{Q}(y)$, where $\mathbb{Q}(\xi)$ denotes the simple field extension of $\mathbb{Q}$ generated by the algebraic number $\xi$. Explicitly,  
	$$
	\Q(\xi)=\{ a_0+a_1\xi+\cdots+a_{n-1}\xi^{\,n-1}\mid a_0,a_1,\cdots,a_{n-1}\in \mathbb{Q},~n\in \mathbb{N}^*\}.
	$$
\end{example}
  \begin{example}\label{example8}
	For $A=\begin{bmatrix}
	1&1&&&\\
		&1&&&\\
	&&-1&-1&\\
	&&&-1&-1\\
	&&&&-1
	\end{bmatrix}$, $B=\begin{bmatrix}
	2&1&&&\\
		&2&&&\\
	&&-2&-1&\\
	&&&-2&-1\\
	&&&&-2
	\end{bmatrix}$. Then it is obvious that $\mathcal{C}(A)=\mathcal{C}(B)$, $\mathcal{C}\mathcal{L}(A)=\mathcal{C}\mathcal{L}(B)$. Let $$f(x)=\frac{1}{128}(3x^4+8x^3-24x^2+32x+48),~g(x)=\frac{1}{8}(-3x^4-4x^3+6x^2+20x-3).$$
	\par Then it is obtained that $A=f(B)$, $B=g(A)$, showing that $A$ and $B$ are polynomial equivalence.
\end{example}
\begin{example}
	Let $\xi_1$, $\xi_2$, $\cdots$, $\xi_n$ and $\mu_1$, $\mu_2$, $\cdots$, $\mu_n$ are two tuples of distinct numbers. Then $$\begin{bmatrix}
		\xi_1& & &\\
		& \xi_2& &\\
		& & \ddots &\\
		& & &\xi_n\\
	\end{bmatrix}\text{ and }\begin{bmatrix}
		\mu_1& & &\\
		& \mu_2& &\\
		& & \ddots &\\
		& & &\mu_n\\
	\end{bmatrix}$$
	are polynomial equivalence, by applying Theorem \ref{th1} or using Lagrange interpolation directly.
\end{example}

\begin{example}
	In the complex field, let $\omega$ be a primitive $6$-th root of unity, and let 
	$A=\begin{bmatrix}
		\omega^2 & 0\\
		0 & \omega^3
	\end{bmatrix}$, 
	$B=\begin{bmatrix}
		\omega^3 & 0\\
		0 & \omega^4
	\end{bmatrix}$. 
	Then 
	$$
	\mathcal{C}_\omega(A)=\mathcal{C}_\omega(B)=\{ 
	\begin{bmatrix} 0 & 0\\ c_3 & 0 \end{bmatrix} \mid c_3 \in \mathbb{C} \}.
	$$ 
	In this example, we also have $A = \omega^5 B$ and $B = \omega A$, $A$ and $B$ are odd polynomial equivalence.
\end{example}
\par The following theorem follows easily from the Chinese remainder theorem, so we omit the proof.
\begin{theorem}
	\label{th1}
	Let $A_i, B_i \in M_{m_i}(\mathbb{F})$ such that $A_i$ and $B_i$ are
	polynomial equivalence for each $1 \leq i \leq s$. If  
	$$(p_{A_1}(x), \cdots, p_{A_s}(x)) = 1, ~ (p_{B_1}(x), \cdots, p_{B_s}(x)) = 1,$$
	then  
	$$
	A=\begin{bmatrix}
		A_1 & & \\
		& \ddots & \\
		& & A_s
	\end{bmatrix}
	~\text{and}~
	B=\begin{bmatrix}
		B_1 & & \\
		& \ddots & \\
		& & B_s
	\end{bmatrix}
	$$
	are polynomial equivalence.
\end{theorem}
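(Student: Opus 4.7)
My plan is to reduce the statement to an application of the Chinese Remainder Theorem in $\mathbb{F}[x]$, exploiting the block-diagonal structure of $A$ and $B$. First I would record that, by the definition of polynomial equivalence, for each $i$ there exist $f_i, g_i \in \mathbb{F}[x]$ with $A_i = f_i(B_i)$ and $B_i = g_i(A_i)$. The goal is then to amalgamate these ``local'' polynomials into single polynomials $F$ and $G$ satisfying $F(B) = A$ and $G(A) = B$.

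The key observation is that for any $h \in \mathbb{F}[x]$, the block-diagonal form of $B$ gives $h(B) = \operatorname{diag}(h(B_1), \ldots, h(B_s))$, so $F(B) = A$ is equivalent to $F(B_i) = A_i = f_i(B_i)$ for every $i$, i.e.\ to the divisibility $m_{B_i}(x) \mid F(x) - f_i(x)$ in $\mathbb{F}[x]$. I would next invoke the coprimality hypothesis: reading $(p_{B_1}(x), \ldots, p_{B_s}(x)) = 1$ as pairwise coprimality of the $p_{B_i}$ (the natural interpretation under which the theorem is true), and using $m_{B_i} \mid p_{B_i}$, the minimal polynomials $m_{B_1}, \ldots, m_{B_s}$ are themselves pairwise coprime. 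The Chinese Remainder Theorem in the PID $\mathbb{F}[x]$ then produces $F \in \mathbb{F}[x]$ with $F \equiv f_i \pmod{m_{B_i}}$ for every $i$; a symmetric argument using the hypothesis on the $p_{A_i}$ yields $G \in \mathbb{F}[x]$ with $G \equiv g_i \pmod{m_{A_i}}$ for every $i$. Evaluating block by block then gives $A = F(B)$ and $B = G(A)$, which is exactly the required polynomial equivalence between $A$ and $B$.

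The main delicate step is the coprimality reduction from characteristic polynomials to minimal polynomials, together with the careful application of the CRT; this is conceptually routine once one passes from $p_{B_i}$ to $m_{B_i}$, but one must take care that the stated hypothesis indeed supplies the pairwise coprimality needed for CRT to fire. Beyond that, the verification that a single polynomial $F$ acts correctly on every diagonal block simultaneously is immediate from the block-diagonal form, and so no further technical obstacle arises.
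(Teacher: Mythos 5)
Your proof is correct and follows essentially the same route as the paper's: both amalgamate the local polynomials $f_i$ into a single $F$ via the Chinese Remainder Theorem and then evaluate block by block, the only (immaterial) difference being that you take the congruences modulo the minimal polynomials $m_{B_i}$ while the paper works modulo the characteristic polynomials $p_{A_i}$. Your explicit remark that the hypothesis must be read as \emph{pairwise} coprimality for CRT to apply is a point the paper glosses over, and it is indeed the interpretation under which the statement holds.
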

\par The next lemma can be verified directly.
\begin{lemma}\label{conjugate}
Let $A\in M_n(\mathbb{F})$ and $P\in GL_n(\mathbb{F})$. Then
$$
P^{-1}\mathcal{C}(A)P=\mathcal{C}(P^{-1}AP),~
P^{-1}\mathcal{C}\mathcal{L}(A)P=\mathcal{C}\mathcal{L}(P^{-1}AP).
$$
\end{lemma}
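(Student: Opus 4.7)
The plan is to verify both identities by direct computation, exploiting that conjugation by $P$ is an algebra automorphism of $M_n(\mathbb{F})$. Because conjugation preserves products, the commuting (or anticommuting) relation between $A$ and $B$ transports to a commuting (or anticommuting) relation between their conjugates, and each inclusion can be obtained from the other by swapping $A$ with $P^{-1}AP$ and $P$ with $P^{-1}$.

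For the first identity, I would start from an arbitrary $B\in\mathcal{C}(A)$, so that $AB=BA$. Multiplying on the left by $P^{-1}$ and on the right by $P$ and inserting $PP^{-1}=I_n$ yields
\[
(P^{-1}AP)(P^{-1}BP)=P^{-1}ABP=P^{-1}BAP=(P^{-1}BP)(P^{-1}AP),
\]
which shows $P^{-1}BP\in\mathcal{C}(P^{-1}AP)$, giving the inclusion $P^{-1}\mathcal{C}(A)P\subseteq\mathcal{C}(P^{-1}AP)$. For the reverse inclusion, I would apply the same argument to $A':=P^{-1}AP$ with the matrix $P^{-1}\in GL_n(\mathbb{F})$, obtaining $(P^{-1})^{-1}\mathcal{C}(A')P^{-1}=P\mathcal{C}(P^{-1}AP)P^{-1}\subseteq\mathcal{C}(A)$, and then conjugating both sides by $P$.

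For the second identity, I would repeat the computation but with a sign: if $AB=-BA$, then
\[
(P^{-1}AP)(P^{-1}BP)=P^{-1}ABP=-P^{-1}BAP=-(P^{-1}BP)(P^{-1}AP),
\]
so $P^{-1}BP\in\mathcal{C}\mathcal{L}(P^{-1}AP)$. The reverse inclusion is established by the same swap as above.

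There is no real obstacle here; the only mild subtlety is making the two inclusions symmetric, which is handled uniformly by the substitution $A\mapsto P^{-1}AP$, $P\mapsto P^{-1}$. This lemma will then serve as a convenient reduction tool, allowing later arguments (for instance those involving Jordan canonical form) to be carried out after an arbitrary similarity transformation.
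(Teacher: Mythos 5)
Your proposal is correct and is exactly the direct verification the paper has in mind (the paper simply states that the lemma ``can be verified directly'' and omits the computation). Both inclusions and the sign-tracking for the clifforder case are handled properly, so nothing further is needed.
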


\par The following two lemmas are classical results in matrix theory; see \cite[p.~31, Corollary~10]{Suprunenko} for proofs.
\begin{lemma}\label{lem3}
Let $A\in M_m(\mathbb{F})$, $B\in M_n(\mathbb{F})$, and $X\in M_{m\times n}(\mathbb{F})$. The equation $AX=XB$ has the unique solution $X=O$ if and only if $p_A(x)$ and $p_B(x)$ are coprime.
\end{lemma}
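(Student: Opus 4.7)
The proof splits naturally into the two directions of the equivalence, and the strategy in each direction is classical.

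For the ``if'' direction, suppose $p_A(x)$ and $p_B(x)$ are coprime. I would first observe that by induction on $k$, the relation $AX = XB$ implies $A^k X = X B^k$ for every $k \geq 0$, hence $f(A) X = X f(B)$ for every polynomial $f(x) \in \mathbb{F}[x]$. Apply this to $f = p_A$: by the Cayley--Hamilton theorem, $p_A(A) = O$, so $X p_A(B) = O$. The key step is then to show that $p_A(B)$ is invertible. Since $(p_A(x), p_B(x)) = 1$, there exist $s(x), t(x) \in \mathbb{F}[x]$ with $s(x)p_A(x) + t(x)p_B(x) = 1$. Substituting $B$ and using $p_B(B) = O$ gives $s(B) p_A(B) = I_n$, so $p_A(B) \in GL_n(\mathbb{F})$. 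Right-multiplying $X p_A(B) = O$ by its inverse yields $X = O$.

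For the ``only if'' direction, I would argue contrapositively: assume $p_A(x)$ and $p_B(x)$ share a root $\lambda$ in the algebraic closure $\overline{\mathbb{F}}$, and I will produce a nonzero $X$ satisfying $AX = XB$, first over $\overline{\mathbb{F}}$ and then over $\mathbb{F}$. Over $\overline{\mathbb{F}}$, there exists a nonzero column vector $u \in \overline{\mathbb{F}}^m$ with $Au = \lambda u$, and a nonzero row vector $v^{T} \in \overline{\mathbb{F}}^n$ with $v^{T} B = \lambda v^{T}$ (an eigenvector of $B^{T}$ for the eigenvalue $\lambda$). Then $X = u v^{T}$ is a nonzero element of $M_{m \times n}(\overline{\mathbb{F}})$ and satisfies $A X = (A u) v^{T} = \lambda u v^{T} = u (\lambda v^{T}) = u (v^{T} B) = X B$.

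To descend to $\mathbb{F}$, the final step is to note that the equation $AX = XB$ is a homogeneous linear system in the entries of $X$ with coefficients in $\mathbb{F}$, so the dimension of its solution space is the same computed over $\mathbb{F}$ or over $\overline{\mathbb{F}}$ (equivalently, it is determined by the rank of a single matrix with entries in $\mathbb{F}$). Hence the existence of a nonzero solution over $\overline{\mathbb{F}}$ forces the existence of a nonzero solution over $\mathbb{F}$, completing the contrapositive. The main technical subtlety is precisely this field-extension argument for the ``only if'' direction; everything else reduces to Cayley--Hamilton and B\'ezout in $\mathbb{F}[x]$.
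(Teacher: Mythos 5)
Your proof is correct and follows essentially the same route as the paper: B\'ezout plus Cayley--Hamilton to invert one of $p_A(B)$ or $p_B(A)$ for the ``if'' direction (the paper inverts $p_B(A)$ and left-multiplies, you invert $p_A(B)$ and right-multiply --- a symmetric variant), and the rank-one matrix $uv^{T}$ built from an eigenvector of $A$ and of $B^{T}$ for the ``only if'' direction. Your treatment is in fact slightly more careful than the paper's, which takes the common eigenvalue and its eigenvectors to lie over $\mathbb{F}$ without comment, whereas you correctly pass to $\overline{\mathbb{F}}$ and descend via the rank of the linear system.
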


\begin{lemma}\label{lem1}
Let $A\in M_n(\mathbb{F})$. Then $\mathcal{C}(A)=\mathbb{F}[A]$ if and only if the minimal polynomial of $A$ coincides with its characteristic polynomial.
\end{lemma}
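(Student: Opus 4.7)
The plan is to prove the two implications separately. Both hinge on the similarity-invariance of the equation $\mathcal{C}(A)=\mathbb{F}[A]$: indeed, Lemma \ref{conjugate} gives $P^{-1}\mathcal{C}(A)P=\mathcal{C}(P^{-1}AP)$, and a direct computation shows $P^{-1}\mathbb{F}[A]P=\mathbb{F}[P^{-1}AP]$, so in each direction one may replace $A$ by any matrix similar to it, in particular by its rational canonical form. The inclusion $\mathbb{F}[A]\subseteq\mathcal{C}(A)$ is immediate, so the content lies in (i) proving the reverse inclusion when $m_A=p_A$ and (ii) producing an explicit commuting matrix outside $\mathbb{F}[A]$ when $m_A\neq p_A$.

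For the direction $m_A(x)=p_A(x)\Rightarrow\mathcal{C}(A)=\mathbb{F}[A]$, I would use the cyclic vector argument. When the minimal and characteristic polynomials coincide, $A$ is similar to the single companion matrix $F(p_A(x))$, so without loss of generality there is a vector $v\in\mathbb{F}^n$ (take $v=e_1$ in the companion form) such that $\{v,Av,A^2v,\ldots,A^{n-1}v\}$ is a basis. Given any $B\in\mathcal{C}(A)$, write $Bv$ uniquely in this basis as $Bv=h(A)v$ for a polynomial $h\in\mathbb{F}[x]$ of degree less than $n$. Since $B$ commutes with $A$, we get inductively
$$
B(A^kv)=A^k(Bv)=A^k h(A)v=h(A)(A^k v)
$$
for every $k\geq 0$, so $B$ and $h(A)$ agree on a basis. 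Hence $B=h(A)\in\mathbb{F}[A]$.

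For the converse, I would argue the contrapositive: assuming $m_A(x)\neq p_A(x)$, I construct an explicit $B\in\mathcal{C}(A)\setminus\mathbb{F}[A]$. Since the invariant factors $f_1\mid f_2\mid\cdots\mid f_k$ of $A$ satisfy $\deg f_k=\deg m_A<n$, we must have $k\geq 2$. Reducing to the rational canonical form $A=\operatorname{diag}\bigl(F(f_1),F(f_2),\ldots,F(f_k)\bigr)$, take $B$ to be the block-diagonal matrix whose second block is the $\deg f_2\times\deg f_2$ identity and whose remaining blocks are zero. Clearly $B$ commutes with $A$ block by block. If $B$ were equal to some $h(A)$, then evaluating block by block would give $h(F(f_1))=O$ and $h(F(f_2))=I$, forcing $f_1(x)\mid h(x)$ and $f_2(x)\mid h(x)-1$; since $f_1\mid f_2$, the second condition also yields $f_1\mid h-1$, so combined with $f_1\mid h$ we get $f_1\mid 1$, contradicting $\deg f_1\geq 1$.

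The only real subtlety is presentational rather than technical: one must either quote the cyclic decomposition theorem (equivalently, the existence of a rational canonical form over $\mathbb{F}$) or reprove it inline. The paper's preliminaries explicitly assume familiarity with invariant factors, so a clean reference is adequate, and beyond that the argument above is entirely elementary, requiring no machinery beyond what is already established in the excerpt.
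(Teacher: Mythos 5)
Your proof is correct. The forward direction ($m_A=p_A\Rightarrow\mathcal{C}(A)=\mathbb{F}[A]$) is essentially identical to the paper's: both reduce to the companion matrix and run the cyclic-vector argument, writing $Bv=h(A)v$ and propagating over the basis $\{A^kv\}$. The converse is where you genuinely diverge. The paper also passes to the rational canonical form $\operatorname{diag}(F(f_1),\ldots,F(f_s))$ with $s\ge 2$, but then invokes Lemma \ref{lem3} to produce a nonzero intertwiner $X$ with $F(f_1)X=XF(f_2)$ (possible because $f_1\mid f_2$ forces their characteristic polynomials to share a root), places $X$ in the $(1,2)$ block, and observes that the resulting matrix commutes with $A$ yet cannot be a polynomial in $A$ since such polynomials are block diagonal. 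You instead take the block-diagonal matrix $B=\operatorname{diag}(O,I,O,\ldots,O)$ and derive a contradiction purely from divisibility: $h(F(f_1))=O$ and $h(F(f_2))=I$ would force $f_1\mid h$ and $f_1\mid h-1$, hence $f_1\mid 1$. Your counterexample is more elementary --- it needs neither Lemma \ref{lem3} nor the Sylvester-equation machinery, only the fact that the minimal polynomial of $F(f)$ is $f$ and that the invariant factors form a divisibility chain --- whereas the paper's construction exhibits a strictly upper-triangular witness that is reused structurally elsewhere (e.g.\ in the proof of Theorem \ref{alph*}). Both are complete; the one presentational point you already flag correctly is that the existence of the rational canonical form over $\mathbb{F}$ must be cited, which the paper's preliminaries permit.
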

  \begin{corollary}
    \label{prop1}
     $\mathcal{C}(J_n(0))=\mathbb F\left[ J_n(0)\right]$.
  \end{corollary}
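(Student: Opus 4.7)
The plan is to derive this as an immediate application of Lemma \ref{lem1}. The key observation is that the Jordan block $J_n(0)$ has both minimal and characteristic polynomial equal to $x^n$: indeed, $J_n(0)^{n-1}$ has a nonzero entry in the $(1,n)$-position, so no polynomial of degree less than $n$ annihilates $J_n(0)$, while $J_n(0)^n = O$. Since the degree of the minimal polynomial is $n$ and it divides the characteristic polynomial (which also has degree $n$ and is monic), the two polynomials coincide.

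Having established that $m_{J_n(0)}(x) = p_{J_n(0)}(x) = x^n$, I would invoke Lemma \ref{lem1} directly to conclude $\mathcal{C}(J_n(0)) = \mathbb{F}[J_n(0)]$. No further argument is needed.

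There is essentially no obstacle here; the statement is a one-line corollary of the preceding lemma, and the only verification required is the elementary computation of the minimal and characteristic polynomials of a single Jordan block. Alternatively, one may observe that the Frobenius block associated with $x^n$ is similar to $J_n(0)$ (equivalently, $J_n(0)$ is already of the cyclic form needed for the argument in the proof of Lemma \ref{lem1}), which again places this within the hypothesis of that lemma.
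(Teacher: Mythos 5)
Your proposal is correct and matches the paper's proof exactly: both observe that $m_{J_n(0)}(x)=p_{J_n(0)}(x)=x^n$ and then apply Lemma \ref{lem1}. The extra verification you supply (that $J_n(0)^{n-1}\neq O$ while $J_n(0)^n=O$) is a harmless elaboration of the same one-line argument.
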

    \begin{proof}Since $m_{J_n(0)}(x)=p_{J_n(0)}(x)=x^n$, then the result follows from Lemma \ref{lem1}.
  \end{proof}
  \begin{corollary}
    \label{th2}
    The set of matrices polynomial equivalence to $J_n(0)$ is $$\{ a_0I_n+a_1J_n(0)+\cdots+a_{n-1}J_n(0)^{n-1}\mid a_0, ~a_1,~a_2, \cdots, ~a_{n-1}\in \mathbb{F} \text{ with } a_1\neq 0\}.$$
  \end{corollary}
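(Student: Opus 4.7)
The plan is to reduce both directions of the corollary to Lemma~\ref{lem1}, applied first to $J_n(0)$ itself and then to the candidate matrix $B$. Since $J_n(0)$ has equal minimal and characteristic polynomials (both equal to $x^n$), Lemma~\ref{lem1} gives $\mathcal{C}(J_n(0)) = \mathbb{F}[J_n(0)]$, and every element of $\mathbb{F}[J_n(0)]$ can be written uniquely as $a_0 I_n + a_1 J_n(0) + \cdots + a_{n-1} J_n(0)^{n-1}$ (after reducing any defining polynomial modulo $x^n$). Consequently, if $B$ is polynomial equivalent to $J_n(0)$ then the relation $B = f(J_n(0))$ immediately puts $B$ in the asserted form; the non-trivial content of the corollary is therefore the characterization of the coefficient $a_1$.

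For the necessity of $a_1 \neq 0$, I would argue by contradiction. If $a_1 = 0$, then $B - a_0 I_n$ lies in the ideal $\mathcal{J} := J_n(0)^2\,\mathbb{F}[J_n(0)]$ of the commutative algebra $\mathbb{F}[J_n(0)]$. For any polynomial $g \in \mathbb{F}[x]$, the factorization $g(x) - g(a_0) = (x - a_0)h(x)$ gives
$$g(B) - g(a_0) I_n = (B - a_0 I_n)\,h(B) \in \mathcal{J},$$
so every element of $\mathbb{F}[B]$ lies in the subspace $\mathbb{F} I_n + \mathcal{J}$. In terms of the basis $I_n, J_n(0), J_n(0)^2, \ldots, J_n(0)^{n-1}$ of $\mathbb{F}[J_n(0)]$, this subspace consists of vectors whose $J_n(0)$-coordinate vanishes, whereas $J_n(0)$ has $J_n(0)$-coordinate equal to $1$. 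Hence $J_n(0) \notin \mathbb{F}[B]$, contradicting polynomial equivalence.

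For the sufficiency of $a_1 \neq 0$, I would factor
$$B - a_0 I_n = J_n(0)\cdot U, \qquad U = a_1 I_n + a_2 J_n(0) + \cdots + a_{n-1} J_n(0)^{n-2},$$
and note that $U$, being the sum of the invertible scalar matrix $a_1 I_n$ and a nilpotent matrix, is invertible in $\mathbb{F}[J_n(0)]$. Since $J_n(0)$ and $U$ commute, $(B - a_0 I_n)^k = J_n(0)^k\,U^k$ vanishes exactly when $k \geq n$, so $B - a_0 I_n$ is nilpotent of index $n$ on $\mathbb{F}^n$. Therefore $B$ is similar to $J_n(a_0)$ and satisfies $m_B(x) = p_B(x) = (x - a_0)^n$. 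A second application of Lemma~\ref{lem1}, now to $B$, yields $\mathcal{C}(B) = \mathbb{F}[B]$. Since $B$ is a polynomial in $J_n(0)$, the matrix $J_n(0)$ commutes with $B$, so $J_n(0) \in \mathcal{C}(B) = \mathbb{F}[B]$, giving the required expression of $J_n(0)$ as a polynomial in $B$. The only mildly delicate step is the ideal/coordinate argument establishing necessity; the remainder is a clean double application of Lemma~\ref{lem1}.
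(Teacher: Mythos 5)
Your proof is correct. The sufficiency direction ($a_1\neq 0$ implies equivalence) is essentially the paper's argument: both of you observe that $B-a_0I_n=J_n(0)U$ with $U$ invertible forces $m_B(x)=p_B(x)=(x-a_0)^n$, apply Lemma~\ref{lem1} to get $\mathcal{C}(B)=\mathbb{F}[B]$, and then use $J_n(0)B=BJ_n(0)$ to place $J_n(0)$ in $\mathbb{F}[B]$. Where you diverge is the necessity direction. The paper argues via Jordan forms: if $a_1=0$ then $B-a_0I_n$ is nilpotent of index strictly less than $n$, so the Jordan form of $B$ has at least two blocks, whence $f(B)$ is similar to a block-diagonal matrix with at least two diagonal blocks and can never be similar to the single block $J_n(0)$. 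You instead work entirely inside the commutative algebra $\mathbb{F}[J_n(0)]$: from $B-a_0I_n\in\mathcal{J}=J_n(0)^2\,\mathbb{F}[J_n(0)]$ and the factorization $g(x)-g(a_0)=(x-a_0)h(x)$ you conclude $\mathbb{F}[B]\subseteq\mathbb{F}I_n+\mathcal{J}$, and the linear independence of $I_n,J_n(0),\dots,J_n(0)^{n-1}$ (degree of the minimal polynomial) shows $J_n(0)\notin\mathbb{F}I_n+\mathcal{J}$. Your version is purely algebraic and avoids any appeal to the Jordan canonical form of $B$; the paper's version is more geometric and is the germ of the block-counting arguments it reuses later. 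Both are complete.
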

  \begin{proof}
    Notice that the matrix in the above set is similar to $a_0I_n+J_n(0)$. Denote by $B=a_0I_n+a_1J_n(0)+a_2J_n(0)^2+\cdots+a_{n-1}J_n(0)^{n-1}$ with $a_1\neq 0$. Then $p_B(x)=m_B(x)$ which implies $\mathcal{C}(B)=\mathbb{F}[B]$. Of course $J_n(0)B=BJ_n(0)$. Hence by Lemma \ref{lem1}, $J_n(0)$ is a polynomial of $B$. For $a_1=0$, $B=a_0I_n+a_2J_n(0)^2+\cdots+a_{n-1}J_n(0)^{n-1}$, we will show that $J_n(0)$ is not a polynomial of $B$. Suppose toward a contradiction that there exists a polynomial $f(x)\in \mathbb{F}[x]$ such that $J_n(0)=f(B)$. Then the Jordan canonical form of $B$ contains at least $2$ Jordan blocks. We may assume that there exists $P\in GL_n(\mathbb{F})$ such that $P^{-1}BP=\begin{bmatrix}
		J_{n_1}(0)&\\
		&J_{n_2}(0)
	\end{bmatrix}$. Hence $P^{-1}J_n(0)P=f(P^{-1}BP)=\begin{bmatrix}
		f(J_{n_1}(0))&\\
		&f(J_{n_2}(0))
	\end{bmatrix}$. And it yields that the Jordan canonical form of $J_n(0)$ contains at least $2$ Jordan blocks which contradicts the Theory of Jordan canonical forms.
  \end{proof}
 \section{A Stronger Relation Between Matrix Centralizers and Polynomial Equivalence\label{strong relation}}
\par To avoid overloading the notation introduced in Section~\ref{Preliminaries}, we now present a broader extension of Theorem \ref{alph*}. We begin by recalling the necessary definitions.  
\par In Lie algebra, the adjoint operator is defined as $\mathrm{ad}_A(X) = [A,X] = AX - XA$. 
For $A,B \in M_n(\mathbb{F})$ and $k \in \mathbb{Z}^+$, denote by $\mathrm{ad}_A^k$ the $k$-fold composition of $\mathrm{ad}_A$, and by $\mathrm{Ker}~\mathrm{ad}_A^k$ its kernel:  
$$
\mathrm{Ker}~\mathrm{ad}_A^k = \{ X \in M_n(\mathbb{F}) \mid \mathrm{ad}_A^k(X) = O \}
=\left\{X \in M_n(\mathbb{F}) \mid \sum_{i=0}^{k} C_k^i (-1)^i A^{k-i} X A^i=O\right\}.
$$
In parallel, define the \emph{$k$-th annihilator} of $B$ as  
$$
\mathrm{Ann}_k(B) := \{ X \in M_n(\mathbb{F}) \mid \mathrm{ad}_X^k(B) = O \}
=\left\{X \in M_n(\mathbb{F}) \mid \sum_{i=0}^{k} C_k^i (-1)^i X^{k-i} B X^i=O\right\}.
$$

\par Before continuing, we state several lemmas that will be needed later.
\begin{lemma}
	\label{diagonal}
Let $M\in M_n(\mathbb{F})$ be diagonalizable over the field $\mathbb{F}$ with no zero divisors, and let $k\in\mathbb{Z}^+$, $x\in\mathbb{F}^n$. Then $M^k x=0$ if and only if $Mx=0$.
\end{lemma}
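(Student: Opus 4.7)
The plan is to reduce to the diagonal case by exploiting the hypothesis that $M$ is diagonalizable over $\mathbb{F}$, and then invoke the no-zero-divisors assumption to pass from $M^k x = 0$ down to $M x = 0$. The reverse implication is immediate: if $Mx=0$, then $M^k x = M^{k-1}(Mx) = 0$, so only the forward direction requires work.

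For the forward direction, I would first choose $P \in GL_n(\mathbb{F})$ with
$$P^{-1} M P = D = \operatorname{diag}(\lambda_1, \lambda_2, \ldots, \lambda_n),$$
where $\lambda_1, \ldots, \lambda_n \in \mathbb{F}$ are the eigenvalues of $M$ (this is exactly what diagonalizability over $\mathbb{F}$ provides). Setting $y = P^{-1} x = (y_1, \ldots, y_n)^T$, the equation $M^k x = 0$ becomes $P D^k y = 0$, equivalently $D^k y = 0$, which componentwise reads $\lambda_i^k \, y_i = 0$ for $1 \le i \le n$.

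The key step is now to observe that $\mathbb{F}$ being a field (in particular having no zero divisors) forces $\lambda_i^k y_i = 0$ to imply $\lambda_i = 0$ or $y_i = 0$. In either case, $\lambda_i y_i = 0$, so $D y = 0$. Translating back via $P$, this gives $M x = P D y = 0$, completing the proof.

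I do not anticipate a genuine obstacle here; the statement is essentially a linear-algebraic restatement of the fact that in an integral domain $a^k = 0$ forces $a = 0$, lifted coordinatewise through the diagonalization. The only point worth flagging is that the assumption ``diagonalizable over $\mathbb{F}$'' is used essentially, so that the eigenvalues actually lie in $\mathbb{F}$ and one can apply the no-zero-divisors property directly to elements of $\mathbb{F}$ rather than working in some extension.
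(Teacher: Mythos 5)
Your proof is correct and follows essentially the same route as the paper's: diagonalize $M$ via $P^{-1}MP=D$, set $y=P^{-1}x$, reduce $M^kx=0$ to the componentwise equations $\lambda_i^k y_i=0$, and use the absence of zero divisors to conclude $\lambda_i y_i=0$ for each $i$, hence $Dy=0$ and $Mx=PDy=0$. No differences worth noting.
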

\begin{proof}
The \textit{if} part is obvious. For the \textit{only if} part, since $M$ is diagonalizable, there exists an invertible matrix $P$ such that $P^{-1}MP=D=\operatorname{diag}(\lambda_1,\cdots,\lambda_n).$ Put $y:=P^{-1}x$ and let $y=\begin{bmatrix}
y_1&y_2&\cdots&y_n
\end{bmatrix}^T$. From $M^k x=0$ we obtain that for each $i$, $\lambda_i^k y_i = 0.$ Over a field $\mathbb{F}$ there are no zero divisors,  $y_i\neq0$ implies $\lambda_i=0$. Thus for every index $i$ with $\lambda_i\neq0$ we must have $y_i=0$. Therefore $Dy=0$, and consequently $Mx = MPy=P(P^{-1}MP)y=P(Dy)=0$.
\end{proof}
\begin{lemma}
	\label{scalar matrix}
	Let $k\geq 2$ and $A=J_n(0)$. If $B\in M_n(\F)$ satisfies $\mathrm{Ker}~\mathrm{ad}_A^k=\mathrm{Ann}_k(B)$, then $B$ is a scalar matrix.
\end{lemma}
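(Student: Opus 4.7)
The plan is to exploit Lemma~\ref{diagonal} by repeatedly producing diagonalizable matrices $X$ that lie in $\mathrm{Ann}_k(B)$: for each such $X$, the operator $\mathrm{Ad}_X$ on $M_n(\F)$ is itself diagonalizable (since $\mathrm{Ad}_X=L_X-R_X$ with $L_X,R_X$ commuting diagonalizable operators), so Lemma~\ref{diagonal} collapses the condition $\mathrm{Ad}_X^k(B)=0$ to the much simpler $XB=BX$. The argument then proceeds in three stages: first I force $B$ to be diagonal; next I show that in fact $\mathrm{Ann}_k(B)=M_n(\F)$; finally I use the enlarged supply of diagonalizable test matrices to force all diagonal entries of $B$ to agree.

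For the first stage, set $D:=\mathrm{diag}(1,2,\ldots,n)$. A direct calculation gives $\mathrm{Ad}_A(D)=A$, so $\mathrm{Ad}_A^2(D)=0$ and hence $D\in\mathrm{Ker}~\mathrm{Ad}_A^k$ for every $k\geq 2$. The hypothesis puts $D\in\mathrm{Ann}_k(B)$, and since $D$ has $n$ distinct eigenvalues, Lemma~\ref{diagonal} applied to $\mathrm{Ad}_D$ yields $DB=BD$; because $D$ has distinct eigenvalues, $B$ must be diagonal.

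Write $B=\mathrm{diag}(b_1,\ldots,b_n)$. For the second stage, note that for every $i\neq j$ we have $E_{ij}^2=0$ and $E_{ij}BE_{ij}=b_jE_{ij}^2=0$, so the binomial expansion of $\mathrm{Ad}_{E_{ij}}^k(B)$ collapses to zero for every $k\geq 2$, i.e.\ $E_{ij}\in\mathrm{Ann}_k(B)$. Every diagonal matrix also commutes with $B$ and thus lies in $\mathrm{Ann}_k(B)$. The hypothesis identifies $\mathrm{Ann}_k(B)$ with the \emph{linear subspace} $\mathrm{Ker}~\mathrm{Ad}_A^k$; a subspace containing every $E_{ij}$ must equal all of $M_n(\F)$, so $\mathrm{Ann}_k(B)=M_n(\F)$.

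For the third stage, fix $i\neq j$ and set $X_{ij}:=D+E_{ij}$. This matrix is triangular with distinct diagonal entries, hence diagonalizable. By the second stage, $X_{ij}\in\mathrm{Ann}_k(B)$, so Lemma~\ref{diagonal} once more collapses $\mathrm{Ad}_{X_{ij}}^k(B)=0$ to $X_{ij}B=BX_{ij}$. Using $BD=DB$, this simplifies to $(b_j-b_i)E_{ij}=0$, forcing $b_i=b_j$. Varying $(i,j)$ shows $B$ is scalar. The principal subtlety is the second stage: \emph{a priori} $\mathrm{Ann}_k(B)$ is only a degree-$k$ algebraic subset of $M_n(\F)$, but the hypothesis forces it to be a linear subspace, and it is this forced linearity that lets the local verifications $E_{ij}\in\mathrm{Ann}_k(B)$ aggregate into the global identity $\mathrm{Ann}_k(B)=M_n(\F)$, which in turn supplies the abundance of diagonalizable test matrices needed in the third stage.
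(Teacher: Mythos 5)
Your proof is correct, and its first half coincides with the paper's: both arguments feed $D=\operatorname{diag}(1,2,\ldots,n)\in\mathrm{Ker}\,\mathrm{Ad}_A^k$ into $\mathrm{Ann}_k(B)$ to force $B$ diagonal (the paper does this by computing the $(r,s)$-entry $b_{rs}(r-s)^k$ directly, which is just the eigenvalue computation hidden inside your appeal to Lemma~\ref{diagonal}). The second half genuinely diverges. The paper stays inside $\mathrm{Ker}\,\mathrm{Ad}_A^k$: it uses the easily verified element $A+D$ of that kernel, applies Lemma~\ref{diagonal} to get $(A+D)B=B(A+D)$, deduces $AB=BA$, and finishes with Corollary~\ref{prop1} ($\mathcal{C}(J_n(0))=\F[J_n(0)]$, whose diagonal elements are scalar). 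You instead first upgrade the hypothesis: since $B$ diagonal makes every $E_{ij}$ ($i\neq j$, via $E_{ij}^2=0$ and $E_{ij}BE_{ij}=0$) and every diagonal matrix lie in $\mathrm{Ann}_k(B)$, and the hypothesis forces $\mathrm{Ann}_k(B)$ to be the linear subspace $\mathrm{Ker}\,\mathrm{Ad}_A^k$, you get $\mathrm{Ann}_k(B)=M_n(\F)$; this supplies the diagonalizable test matrices $D+E_{ij}$, which are \emph{not} in general elements of $\mathrm{Ker}\,\mathrm{Ad}_A^k$, and a second application of Lemma~\ref{diagonal} gives $b_i=b_j$ directly, with no appeal to Corollary~\ref{prop1}. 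Both arguments are valid; yours is slightly more self-contained at the final step, and it surfaces a consequence the paper does not remark on: the hypothesis forces $\mathrm{Ker}\,\mathrm{Ad}_{J_n(0)}^k=M_n(\F)$, i.e.\ $\mathrm{Ad}_A^k=0$, which for $J_n(0)$ happens only when $k\geq 2n-1$; so for $2\leq k\leq 2n-2$ (and $n\geq 2$) the lemma is vacuously true. This does not affect the correctness of either proof, but it is worth being aware that the non-vacuous range of the hypothesis is narrower than the statement suggests.
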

\begin{proof}
Let $K=\operatorname{diag}(1,2,\cdots,n)$. Observe that $AK-KA=A$, so that $\mathrm{ad}_A^k(D)=\mathrm{ad}_A^{k-1}(AK-KA)=\mathrm{ad}_A^{k-1}(A)=O$. Thus $K\in \mathrm{Ker}~(\mathrm{ad}_A^k)=\mathrm{Ann}_k(B)$, leading to $$\sum_{i=0}^k C_k^i(-1)^i K^{k-i} B K^i = O.$$ Write $B=(b_{ij})$. Since $K=\operatorname{diag}(1,2,\cdots,n)$, the $(r,s)$-entry of the above sum equals
$$
b_{rs}\sum_{i=0}^k \binom{k}{i}(-1)^i r^{k-i}s^i 
= b_{rs}(r-s)^k.
$$
Hence the condition reduces to
$
b_{rs}(r-s)^k=0
$ for all $1\leq r,s\leq n$. If $r\ne s$, then $(r-s)^k\ne 0$ (with $k\ge2$ over a field $\F$ of characteristic $0$), it follows that $b_{rs}=0$. Therefore $B=\operatorname{diag}(b_{11},b_{22},\cdots,b_{nn})$. Moreover, as $A,K\in\mathrm{Ker}~\mathrm{ad}_A^k$ and $\mathrm{ad}_A^k$ is linear, their sum $A+K$ also lies in $\mathrm{Ker}~\mathrm{ad}_A^k=\mathrm{Ann}_k(B)$, and consequently $\mathrm{ad}_{A+K}^k(B)=O$. The matrix of the operator $\mathrm{ad}_{A+K}$ under the vectorization identification is
$(A+K)\otimes I_n - I_n\otimes (A+K)^T$, which is diagonalizable since $A+K$ is diagonalizable. By Lemma \ref{diagonal}, $\mathrm{ad}_{A+K}^k(B)=O$ implies $\mathrm{ad}_{A+K}(B)=O$, i.e. $(A+K)B-B(A+K)=O$. Because both $B$ and $K$ are diagonal, this yields $AB=BA$, so Corollary \ref{prop1} ensures that $B\in \mathcal{C}(A)=\F[J_n(0)]$, which forces $b_{11}=b_{22}=\cdots=b_{nn}$.
\end{proof}
\begin{lemma}
	\label{lemma5.3}
	Let $k \geq 2$ and $A \in M_n(\F)$ be nilpotent. If $B \in M_n(\F)$ satisfies 
	$\mathrm{Ker}~\mathrm{ad}_A^k=\mathrm{Ann}_k(B)$, 
	then $B$ must be a scalar matrix.
\end{lemma}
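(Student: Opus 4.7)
The plan is to extend the argument of Lemma \ref{scalar matrix} from a single Jordan block to a general nilpotent matrix via Jordan decomposition, using the same diagonal test-matrix $K=\operatorname{diag}(1,2,\ldots,n)$ together with carefully chosen off-diagonal commuters in $\mathcal{C}(A)$. First I would reduce to the case $A=\operatorname{diag}(J_{n_1}(0),\ldots,J_{n_s}(0))$: both sides of the equation $\mathrm{Ker}~\mathrm{Ad}_A^k=\mathrm{Ann}_k(B)$ are compatible with conjugation (in the spirit of Lemma \ref{conjugate}) and being a scalar matrix is conjugation-invariant, so this reduction is harmless. A direct block computation yields $\mathrm{Ad}_A(K)=A$ and $\mathrm{Ad}_A(A)=0$, hence $K\in\mathrm{Ker}~\mathrm{Ad}_A^k$ for every $k\geq 2$. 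The hypothesis therefore forces $K\in\mathrm{Ann}_k(B)$, i.e.\ $\mathrm{Ad}_K^k(B)=0$, and the very binomial-identity computation of Lemma \ref{scalar matrix} yields $(r-s)^k b_{rs}=0$, so $B$ must be diagonal.

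Next, by linearity $A+K\in\mathrm{Ker}~\mathrm{Ad}_A^k$, and the hypothesis yields $\mathrm{Ad}_{A+K}^k(B)=0$. The matrix $A+K$ is upper triangular with distinct diagonal entries $1,2,\ldots,n$, hence diagonalizable over $\F$, so the vectorized operator $\mathrm{Ad}_{A+K}$, whose matrix is $(A+K)\otimes I_n - I_n\otimes(A+K)^T$, is also diagonalizable. Applying Lemma \ref{diagonal} to this operator gives $\mathrm{Ad}_{A+K}(B)=0$, and combining this with $\mathrm{Ad}_K(B)=0$ yields $AB=BA$. A diagonal matrix commuting with a Jordan block must be scalar on that block, so
$$B=\operatorname{diag}(\beta_1 I_{n_1},\ldots,\beta_s I_{n_s}).$$

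To conclude, for every pair $i<j$ I would pick an off-diagonal commuter $X'_{ij}\in\mathcal{C}(A)$ whose only nonzero block is in position $(i,j)$ and equals some nonzero $P_{ij}$ satisfying $J_{n_i}(0)P_{ij}=P_{ij}J_{n_j}(0)$; the block-Toeplitz description of $\mathcal{C}(A)$ provides such a $P_{ij}$ for any pair of block sizes (for instance $P_{ij}=[O\,|\,I_{n_i}]$ when $n_i\leq n_j$, and the analogous truncation-of-identity otherwise). Set
$$X=K+\sum_{i<j}X'_{ij},$$
which is upper triangular with diagonal $(1,\ldots,n)$ and therefore diagonalizable, and which lies in $\mathrm{Ker}~\mathrm{Ad}_A^k$. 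Lemma \ref{diagonal} once again gives $\mathrm{Ad}_X(B)=0$; since $\mathrm{Ad}_K(B)=0$ and the various $\mathrm{Ad}_{X'_{ij}}(B)$ have pairwise disjoint block supports (each lives in a distinct off-diagonal block $(i,j)$), each $\mathrm{Ad}_{X'_{ij}}(B)$ must separately vanish. A direct calculation gives $\mathrm{Ad}_{X'_{ij}}(B)=(\beta_j-\beta_i)X'_{ij}$, which forces $\beta_i=\beta_j$ for every pair, so all the $\beta_i$ coincide and $B$ is scalar.

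The main obstacle is the final coupling step: producing off-diagonal commuters $X'_{ij}$ that yield nontrivial $\mathrm{Ad}_{X'_{ij}}(B)$ for every pair of blocks regardless of whether the sizes $n_i$ and $n_j$ match, and then upgrading the weak hypothesis $\mathrm{Ad}_X^k(B)=0$ to the strong conclusion $\mathrm{Ad}_X(B)=0$. The first difficulty is handled by the block-Toeplitz structure of $\mathcal{C}(A)$, which supplies a nonzero $P_{ij}$ for any ordered pair of sizes; the second is handled precisely by adding the diagonal term $K$ to $\sum_{i<j} X'_{ij}$, which guarantees that the sum has distinct eigenvalues and is therefore diagonalizable, so that Lemma \ref{diagonal} can be applied.
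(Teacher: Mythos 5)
Your proposal is correct and follows essentially the same strategy as the paper's proof: diagonal test matrices in $\mathrm{Ker}\,\mathrm{Ad}_A^k$ combined with the binomial identity to force $B$ diagonal, then diagonalizable elements of the kernel together with Lemma \ref{diagonal} to upgrade $\mathrm{Ad}_X^k(B)=O$ to $XB=BX$, and finally off-diagonal intertwiners between Jordan blocks to equalize the block scalars. The only differences are organizational: you use the single global matrix $K=\operatorname{diag}(1,\ldots,n)$ (via $\mathrm{Ad}_A(K)=A$) to get full diagonality of $B$ in one step where the paper first gets block-diagonality and then applies Lemma \ref{scalar matrix} blockwise, and you bundle all the off-diagonal couplings into one test matrix rather than treating pairs one at a time — both are harmless streamlinings.
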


\begin{proof}
In what follows, we extend the base field to $\overline{\F}$, the algebraic closure of $\F$, so as to employ the Jordan canonical form. Assume that  
$
\mathrm{Ker}~ \mathrm{ad}_A^k=\mathrm{Ann}_k(B).
$ 
From the view of Lemma \ref{conjugate}, one may take  
$$
A=
\begin{bmatrix}
\tilde{J}_{n_1}(0) & & &\\
 & \tilde{J}_{n_2}(0) &&\\
 &&\ddots &\\
 &&& \tilde{J}_{n_s}(0)
\end{bmatrix},
$$  
with $n_1 \leq n_2 \leq \cdots \leq n_s$.  
Consider  
$$
X=\begin{bmatrix}
p_1 I_{n_1} & & &\\
 & p_2 I_{n_2} &&\\
 && \ddots &\\
 &&& p_s I_{n_s}
\end{bmatrix},
$$ 
where $p_1, p_2, \cdots, p_s$ are distinct integers in $\overline{\F}$. Clearly $AX=XA$. Furthermore, $$\mathrm{ad}_A^k(X)=\mathrm{ad}_A^{k-1}(AX-XA)=\mathrm{ad}_A^{k-1}(O)=O,$$so that $X \in \mathrm{Ker}~ \mathrm{ad}_A^{k}=\mathrm{Ann}_k(B)$. Consequently,  
$
\sum_{i=0}^{k} C_k^i (-1)^i X^{k-i} B X^i = O.
$  
Hence $B$ admits the block form $B=(B_{ij})$ with $B_{ij}\in M_{n_i \times n_j}(\mathbb{F})$, and thus, for each $1\leq r,t\leq s$,
$$
\sum_{i=0}^{k}C_k^i(-1)^i p_r^{k-i}B_{rt} p_t^{i}
=\left(\sum_{i=0}^{k}C_k^i(-1)^i p_r^{k-i}p_t^{i}\right)B_{rt}
=(p_r-p_t)^k B_{rt}=O.
$$
Since the $p_j$ are distinct integers, $p_r-p_t\neq0$ for $r\neq t$. Over a field of characteristic $0$, $(p_r-p_t)^k\neq0$, hence $B_{rt}=O$ whenever $r\neq t$. Therefore
$
B=\operatorname{diag}(B_{11},\cdots,B_{ss}),
$
so $B$ is block-diagonal with respect to the decomposition. By Lemma \ref{scalar matrix}, it follows that  
$$
B=\begin{bmatrix}
	\mu_1I_{n_1}&&&\\
	&\mu_2I_{n_2}&&\\
	&&\ddots&\\
	&&&\mu_sI_{n_s}
\end{bmatrix}.
$$
Now let  
$$
X=\begin{bmatrix}
 O&  \begin{bmatrix}O&I_{n_1}\end{bmatrix}&O& \cdots&O\\
 O& O&O&\cdots&O\\
 \vdots&\vdots&\vdots&\ddots &\vdots\\
 O&O&O&\cdots&O
\end{bmatrix},~K=\operatorname{diag}(1,2,\cdots,n_1+n_2+\cdots+n_s).
$$ 
Then $AX=XA$ and $AK-KA=A$, so $X,K\in \mathrm{Ker}~\mathrm{ad}_A^k$. Thus $X+K\in \mathrm{Ker}~\mathrm{ad}_A^k=\mathrm{Ann}_k(B)$, and therefore $\mathrm{ad}_{X+K}^k(B)=O$. Note that $X+K$ is diagonalizable, hence $(X+K)\otimes I_n - I_n\otimes (X+K)^T$ is also diagonalizable, meaning that $\mathrm{ad}_{X+K}$ is diagonalizable. By Lemma \ref{diagonal}, this gives $\mathrm{ad}_{X+K}(B)=O$, i.e. $(X+K)B=B(X+K)$. Since $BK=KB$, it follows that $XB=BX$. From  
$$
\begin{bmatrix}O&I_{n_1}\end{bmatrix}(\mu_2-\mu_1)=O,
$$  
we deduce $\mu_1=\mu_2$. Repeating the same argument with other choices of $X$ yields $\mu_1=\mu_2=\cdots=\mu_s$, hence $B$ is a scalar matrix over $\overline{\mathbb{F}}$. In particular, if $B$ exists over $\mathbb{F}$, then $B$ is necessarily a scalar matrix. 
\end{proof}
\par Equipped with these lemmas, we can now state our main theorem below.
\begin{theorem}[An Extension of Theorem \ref{alph*}]
	\label{alph*2}
	Given two matrices $A$, $B\in M_n(\F)$. Then there exists $k_1,k_2\in \Z^+$ such that $\mathrm{Ker}~ \mathrm{ad}_A^{k_1}=\mathrm{Ann}_{k_1}(B)$, $\mathrm{Ker}~ \mathrm{ad}_B^{k_2}=\mathrm{Ann}_{k_2}(A)$ if and only if $A$ and $B$ are polynomial equivalence over $\F$.
\end{theorem}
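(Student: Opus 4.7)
The case $k=1$ follows immediately from Theorem~\ref{alph*}, since then $\mathrm{Ker}~\mathrm{Ad}_A=\mathcal{C}(A)$ and $\mathrm{Ann}_1(B)=\mathcal{C}(B)$, so the hypothesis reduces to $\mathcal{C}(A)=\mathcal{C}(B)$. Thus assume $k\geq 2$. The plan is to mirror the strategy of the proof of Theorem~\ref{alph*}, but with Lemma~\ref{lemma5.3} playing the role that the double-centraliser identification played there. Working over $\overline{\mathbb{F}}$, I will extract a polynomial $p(x)$ with $p(A)=B$ from the first hypothesis, use the symmetric argument to obtain $q(x)$ with $q(B)=A$, and then descend to $\mathbb{F}$ exactly as in the closing paragraph of the proof of Theorem~\ref{alph*} via a standard linear-system argument.

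By Lemma~\ref{conjugate} I may assume $A=\operatorname{diag}(\tilde{J}_{n_1}(\lambda_1),\ldots,\tilde{J}_{n_t}(\lambda_t))$ with $\lambda_1,\ldots,\lambda_t$ mutually distinct, writing $\tilde{J}_{n_i}(\lambda_i)=\lambda_i I+N_i$ where each $N_i$ is nilpotent. The first task is to force block-diagonality of $B$: the matrix $X=\operatorname{diag}(p_1 I_{n_1},\ldots,p_t I_{n_t})$ with distinct integers $p_i$ satisfies $X\in\mathcal{C}(A)\subseteq\mathrm{Ker}~\mathrm{Ad}_A^k=\mathrm{Ann}_k(B)$, and writing $B=(B_{ij})$ in block form, the identity $\mathrm{Ad}_X^k(B)_{ij}=(p_i-p_j)^k B_{ij}=O$ forces $B_{ij}=O$ whenever $i\neq j$. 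Hence $B=\operatorname{diag}(B_1,\ldots,B_t)$ with each $B_i\in M_{n_i}(\overline{\mathbb{F}})$.

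The next, and in my view most delicate, step is to show that the hypothesis localises to each diagonal block. For $Y\in M_{n_i}(\overline{\mathbb{F}})$, let $X_Y\in M_n(\overline{\mathbb{F}})$ denote the matrix supported only in the $(i,i)$-block, with value $Y$ there. A direct block computation shows that $\mathrm{Ad}_A^k(X_Y)$ remains supported in the $(i,i)$-block and equals $\mathrm{Ad}_{N_i}^k(Y)$ there (the scalar part $\lambda_i I$ contributes nothing to $\mathrm{Ad}$), and, using the block-diagonality of $B$, that $\mathrm{Ad}_{X_Y}^k(B)$ is likewise supported in the $(i,i)$-block and equals $\mathrm{Ad}_Y^k(B_i)$. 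The hypothesis $\mathrm{Ker}~\mathrm{Ad}_A^k=\mathrm{Ann}_k(B)$ therefore restricts to the equality $\mathrm{Ker}~\mathrm{Ad}_{N_i}^k=\mathrm{Ann}_k(B_i)$ inside $M_{n_i}(\overline{\mathbb{F}})$. Since $N_i$ is nilpotent and $k\geq 2$, Lemma~\ref{lemma5.3} now applies and yields $B_i=c_i I_{n_i}$ for some $c_i\in\overline{\mathbb{F}}$; hence $B=\operatorname{diag}(c_1 I_{n_1},\ldots,c_t I_{n_t})$.

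Letting $m_i$ denote the size of the largest Jordan block of $A$ at $\lambda_i$ (so $N_i^{m_i}=O$), the Chinese Remainder Theorem — the polynomials $(x-\lambda_i)^{m_i}$ being pairwise coprime — delivers $p(x)\in\overline{\mathbb{F}}[x]$ with $p(x)\equiv c_i\pmod{(x-\lambda_i)^{m_i}}$ for each $i$; a direct expansion of $p(\lambda_i I+N_i)$ then shows $p(\tilde{J}_{n_i}(\lambda_i))=c_i I_{n_i}$, and consequently $p(A)=B$. Applying the identical strategy to the symmetric hypothesis $\mathrm{Ker}~\mathrm{Ad}_B^k=\mathrm{Ann}_k(A)$, after Jordan-forming $B$, yields $q(x)\in\overline{\mathbb{F}}[x]$ with $q(B)=A$; in fact this step forces $A$ to be semisimple with the same block decomposition as $B$, so that $q$ may be taken to be a Lagrange interpolant on the distinct values $c_i$. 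The descent from $\overline{\mathbb{F}}$ back to $\mathbb{F}$ then mimics the closing argument of the proof of Theorem~\ref{alph*}: the existence of a solution over $\overline{\mathbb{F}}$ to the linear system expressing $B$ as an $\mathbb{F}$-linear combination of $I,A,\ldots,A^{n-1}$ implies the existence of a solution over $\mathbb{F}$. The principal technical hurdle is the block-localisation step above, since it demands simultaneous verification that $\mathrm{Ad}_A^k(X_Y)$ and $\mathrm{Ad}_{X_Y}^k(B)$ remain confined to the $(i,i)$-block — a property that depends crucially on the block-diagonality of $B$ established immediately before.
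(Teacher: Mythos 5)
Your proposal is correct and follows essentially the same route as the paper: reduce to $k\ge 2$, pass to $\overline{\mathbb{F}}$ and the Jordan form of $A$, use Lemma~\ref{lemma5.3} to show $B$ is scalar on each generalized eigenspace of $A$, interpolate via the Chinese Remainder Theorem, argue symmetrically, and descend to $\mathbb{F}$ by the linear-system argument. The only difference is that you spell out the block-localisation step (showing $\mathrm{Ker}~\mathrm{Ad}_{N_i}^k=\mathrm{Ann}_k(B_i)$ on each block before invoking the nilpotent lemma), which the paper leaves implicit when it cites Lemma~\ref{lemma5.3} directly; this is a welcome clarification rather than a divergence.
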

\begin{proof}
For the \textit{if} part, one can simply take $k_1=k_2=1$. In this case, $\mathrm{Ker}~\mathrm{ad}_A = \mathrm{Ann}_1(A) = \mathcal{C}(A)$, $\mathrm{Ker}~\mathrm{ad}_B = \mathrm{Ann}_1(B) = \mathcal{C}(B).$ Hence, the statement reduces to Theorem~\ref{alph*}. For the \textit{only if} part, we may assume $k_1,k_2 \geq 2$ and extend the base field to $\overline{\F}$, the algebraic closure of $\F$, in order to apply the Jordan canonical form.
 Assume $\mathrm{Ker}~ \mathrm{ad}_A^{k_1}=\mathrm{Ann}_{k_1}(B)$. By Lemma \ref{conjugate} one may take
	$$
	A=
	\begin{bmatrix}
	\tilde{J}_{n_1}(\lambda_1) & & &\\
	 & \tilde{J}_{n_2}(\lambda_2) &&\\
	 &&\ddots &\\
	 &&& \tilde{J}_{n_s}(\lambda_s)
	\end{bmatrix},
	$$  
	with $n_1 \leq n_2 \leq \cdots \leq n_s$, where $\lambda_1,\lambda_2,\cdots,\lambda_s$ are pairwise distinct and each $\tilde{J}_{n_i}(\lambda_i)$ consists of several Jordan blocks of eigenvalue $\lambda_i$ for $i\in\{1,2,\cdots,s\}$. Now let $B$ be the block matrix $B = (B_{ij})$, where $B_{ij} \in M_{n_i \times n_j}(\overline{\mathbb{F}})$. 
Then we note that $B \in \mathrm{Ann}_{k_1}(B) = \mathrm{Ker}~ \mathrm{ad}_A^{k_1}$. 
Hence, we deduce that 
$$
\sum_{i=0}^{k_1} C_{k_1}^i (-1)^i A^{k_1-i} B A^i = O.
$$
Expanding this expression and defining 
$$
\phi_{ij} : M_{n_i \times n_j} \to M_{n_i \times n_j},~X \mapsto \tilde{J}_{n_i} X - X \tilde{J}_{n_j},
$$
we find that $\phi_{ij}^{k_1}(B_{ij}) = O$. 
By Lemma~\ref{lem3}, we note that for $i \neq j$, the equation $\phi_{ij}(X) = O$ has only the trivial solution $X = O$. Therefore, $\phi_{ij}$ has no eigenvalue $0$, and consequently, neither does $\phi_{ij}^{k_1}$. 
From this it follows that $B_{ij} = O$ for all $i \neq j$, and hence $B = \operatorname{diag}(B_{11}, \dots, B_{ss})$. Then, by Lemma \ref{lemma5.3}, we obtain 
$$
B=\begin{bmatrix}
d_1I_{n_1}&&&\\
&d_2I_{n_2}&&\\
&&\ddots&\\
&&&d_sI_{n_s}
\end{bmatrix},
$$
with $d_1,d_2,\cdots,d_s\in \overline{\F}$. An application of the Chinese Remainder Theorem (or equivalently, Lagrange interpolation) ensures the existence of a polynomial $f_1(x)\in \overline{\F}[x]$ satisfying $B=f_1(A)$. Likewise, from $\mathrm{Ker}~\mathrm{ad}_B^{k_2}=\mathrm{Ann}_{k_2}(A)$ it follows that there exists $g_1(x)\in \overline{\F}[x]$ such that $A=g_1(B)$. Hence, $A$ and $B$ are polynomial equivalence over the field $\overline{\mathbb{F}}$. Let $f_1(x)=c_mx^m+c_{m-1}x^{m-1}+\cdots+c_1x+c_0$ with coefficients $c_i\in \overline{\mathbb{F}}$ for $i\in\{0,1,\cdots,m\}$. The linear system $B=x_mA^m+x_{m-1}A^{m-1}+\cdots+x_1A+x_0I$ admits a solution $(c_m,\cdots,c_1,c_0)^T\in \overline{\mathbb{F}}^{m+1}$ with $A,B\in M_n(\mathbb{F})$. By the theory of linear systems, there exists another solution $(a_m',\cdots,a_0')^T\in \mathbb{F}^{m+1}$ satisfying $B=a_m'A^m+\cdots+a_1'A+a_0'I$, which implies that $A$ and $B$ are polynomial equivalence over $\mathbb{F}$.
\end{proof}
\par Additionally, the $\omega$-centralizers of matrices can be rewritten as a system of linear equations as follows. Let $\tilde{X}$ be the stretch column vector of the matrix $X=(x_{ij})_{n\times n}$ by arranging the variables $x_{ij}$ in row lexicographic ordering. Then the matrix equation $AX=\omega XA$ can be written as a system of linear equations as follows:  
 $$ (I_n\otimes A - \omega A^T\otimes I_n)\tilde{X} = 0, $$  
 where ``$ \otimes $'' denotes the Kronecker product of matrices.  This representation allows one to compare the centralizers of matrices in the framework of linear algebra.

\begin{theorem}
	\label{compare centralizer}
	Let $A$, $B \in M_n(\mathbb{F})$ and $\omega\in \mathbb{F}$. Then $\mathcal{C}_\omega(A)=\mathcal{C}_\omega(B)$ if and only if there exists $P\in GL_{n^2}(\mathbb{F})$ such that $I_n\otimes A - \omega A^T\otimes I_n=P(I_n\otimes B - \omega B^T\otimes I_n)$. \end{theorem}

\begin{proof}Notice that $\mathcal{C}_\omega(A)=\mathcal{C}_\omega(B)$ if and only if 
$$(I_n\otimes A-\omega A^T\otimes I_n)\tilde{X}=0$$
and 
$$(I_n\otimes B-\omega B^T\otimes I_n)\tilde{X}=0$$
share the same set of solutions. By Lemma \ref{tj}, there exists $P\in GL_{n^2}(\mathbb{F})$ such that $I_n\otimes A - \omega A^T\otimes I_n=P(I_n\otimes B - \omega B^T\otimes I_n)$. \end{proof}

\par By Theorems \ref{alph*}, Theorem \ref{alph*2}, and Theorem \ref{compare centralizer}, we obtain the following equivalence.

\begin{theorem}
	\label{central}
	Let $A, B \in M_n(\mathbb{F})$. The following statements are equivalent:
	\begin{enumerate}
		\item $\mathcal{C}(A) = \mathcal{C}(B)$;
		\item There exist polynomials $f(x), g(x) \in \mathbb{F}[x]$ of degree at most $n-1$ such that $A = f(B)$ and $B = g(A)$;
		\item There exists $P \in GL_{n^2}(\mathbb{F})$ such that 
		\[
		I_n \otimes A - A^T \otimes I_n = P (I_n \otimes B - B^T \otimes I_n);
		\]
		\item There exists $k_1,k_2 \in \mathbb{Z}^+$ such that 
		\[
		\mathrm{Ker}~\mathrm{ad}_A^{k_1} = \mathrm{Ann}_{k_1}(B),~ \mathrm{Ker}~\mathrm{ad}_B^{k_2} = \mathrm{Ann}_{k_2}(A).
		\]
	\end{enumerate}
\end{theorem}
  \section{\label{clifforder}Clifforders and Odd Polynomial Equivalence}
  \par In this section, we mainly focus on the relationship between clifforders and odd polynomial equivalence of matrices. We begin with some examples.
\begin{example}
	For $A=\begin{bmatrix}1&0\\0&-1\end{bmatrix}$, the centralizer $$\mathcal{C}(A)=\left\{\begin{bmatrix}a&0\\0&b\end{bmatrix}\mid a, b\in\mathbb F\right\},~\mathcal{C}\mathcal{L}(A)=\left\{\begin{bmatrix}0&c\\d&0\end{bmatrix}\mid c, d\in\mathbb F\right\}.$$ \par Hence
	$\mathcal{C}(A)\oplus \mathcal{C}\mathcal{L}(A)=M_2(\mathbb F)$. 
	\end{example}

The following properties can be easily obtained.

\begin{theorem}
	Let $A\in M_n(\mathbb{F})$. Then \textbf{(1)} $\mathcal{C}\mathcal{L}(A)$ is a subspace of $M_n(\mathbb{F})$. \textbf{(2)} For $A_1$, $A_2$, $A_3\in \mathcal{C}\mathcal{L}(A)$,  $A_1A_2A_3\in \mathcal{C}\mathcal{L}(A)$.
\end{theorem}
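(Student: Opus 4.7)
The plan is to verify both claims by direct calculation from the anti-commutation relation $AX=-XA$ that defines membership in $\mathcal{C}\mathcal{L}(A)$. No structural theory is needed; both parts are bookkeeping of signs.

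For part (1), I would first note that the zero matrix lies trivially in $\mathcal{C}\mathcal{L}(A)$, so the set is nonempty. Then, taking arbitrary $B_1,B_2\in\mathcal{C}\mathcal{L}(A)$ and scalars $\alpha,\beta\in\mathbb{F}$, I would use bilinearity of matrix multiplication to expand $A(\alpha B_1+\beta B_2)=\alpha(AB_1)+\beta(AB_2)$, substitute $AB_i=-B_iA$, and refactor to get $-(\alpha B_1+\beta B_2)A$. This establishes closure under addition and scalar multiplication, hence $\mathcal{C}\mathcal{L}(A)$ is a subspace of $M_n(\mathbb{F})$.

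For part (2), the approach is to commute $A$ past the factors of $A_1A_2A_3$ one at a time, collecting a sign at each step. Concretely, I would compute
$$A(A_1A_2A_3)=(AA_1)A_2A_3=-A_1(AA_2)A_3=A_1A_2(AA_3)=-A_1A_2A_3A,$$
where each equality uses $AA_i=-A_iA$. The three applications contribute a factor of $(-1)^3=-1$, giving the desired anti-commutation. In fact, the same argument shows more generally that any product of an odd number of elements of $\mathcal{C}\mathcal{L}(A)$ again lies in $\mathcal{C}\mathcal{L}(A)$, whereas a product of an even number lies in $\mathcal{C}(A)$; this parity pattern is the underlying content of the theorem.

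There is no genuine obstacle here, and the theorem should be viewed as a structural preliminary: it is exactly the observation needed to speak of $\mathcal{C}\mathcal{L}(A)$ as a $\mathcal{C}(A)$-bimodule (as noted in the introduction) and to make sense of odd polynomial equivalence in the study of clifforders carried out in the remainder of Section~\ref{clifforder}.
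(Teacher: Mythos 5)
Your proof is correct and is exactly the direct sign-bookkeeping verification the paper intends (the paper omits the proof entirely, remarking only that the properties "can be easily obtained"). The computations for both parts check out, and your parity remark about even products landing in $\mathcal{C}(A)$ is the right underlying observation.
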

\begin{theorem}
	\label{clcl}
	Let $A$, $B\in M_n(\mathbb{F})$. If $\mathcal{C}\mathcal{L}(A)=\mathcal{C}\mathcal{L}(B)$ and $\mathcal{C}\mathcal{L}(A)$ contains an invertible matrix, then $A$ and $B$ are odd polynomial equivalence.
\end{theorem}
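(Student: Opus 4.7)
The plan is to exploit the invertible element $P \in \mathcal{C}\mathcal{L}(A)=\mathcal{C}\mathcal{L}(B)$ to simultaneously recover a centralizer equality, invoke Theorem~\ref{alph*} to produce polynomial witnesses, and conjugate $A$ into $-A$ so the witnesses can be truncated to their odd parts. To set this up, I would first apply Theorem~\ref{4.7}: the invertibility hypothesis is equivalent to $A$ being balanced. Fix any $P \in GL_n(\mathbb{F}) \cap \mathcal{C}\mathcal{L}(A)$; since $\mathcal{C}\mathcal{L}(A) = \mathcal{C}\mathcal{L}(B)$, the same $P$ satisfies both $AP=-PA$ and $BP=-PB$, equivalently $PAP^{-1}=-A$ and $PBP^{-1}=-B$.

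Next I would prove $\mathcal{C}(A)=\mathcal{C}(B)$ by showing that right-multiplication by $P$ sends the centralizer of $A$ into its clifforder. For $C\in\mathcal{C}(A)$, the direct computation
\[
A(CP)=(CA)P=C(AP)=-CPA=-(CP)A
\]
places $CP$ in $\mathcal{C}\mathcal{L}(A)=\mathcal{C}\mathcal{L}(B)$. Hence $BCP=-CPB$, and substituting $PB=-BP$ on the right gives $BCP=CBP$; invertibility of $P$ then forces $BC=CB$, so $C\in\mathcal{C}(B)$. The symmetric argument yields the reverse inclusion. Theorem~\ref{alph*} now delivers polynomials $f,g\in\mathbb{F}[x]$ with $B=f(A)$ and $A=g(B)$.

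Finally I would trim $f$ to its odd part. Splitting $f=f_e+f_o$ into its even and odd components and conjugating $B=f(A)$ by $P$ yields
\[
-B=PBP^{-1}=f(PAP^{-1})=f(-A)=f_e(A)-f_o(A),
\]
whereas $B=f_e(A)+f_o(A)$ directly. Adding these two equations gives $2f_e(A)=O$, and because $\mathrm{char}\,\mathbb{F}=0$ we obtain $f_e(A)=O$, so $B=f_o(A)$ with $f_o$ odd. The identical argument applied to $g$ produces an odd $g_o$ with $A=g_o(B)$, establishing odd polynomial equivalence. The only substantive observation in the whole argument is the centralizer-to-clifforder swap effected by right-multiplication by $P$; once this is in hand, Theorem~\ref{alph*} supplies the polynomials and the even/odd decomposition finishes the proof, so I do not anticipate a serious obstacle.
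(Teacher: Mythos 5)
Your proposal is correct and follows essentially the same route as the paper's proof: use the invertible $P\in\mathcal{C}\mathcal{L}(A)=\mathcal{C}\mathcal{L}(B)$ to transport $\mathcal{C}(A)$ into the common clifforder (the paper multiplies by $P$ on the left, you on the right — immaterial), deduce $\mathcal{C}(A)=\mathcal{C}(B)$, invoke Theorem~\ref{alph*}, and then kill the even part of the polynomial via the relation $PAP^{-1}=-A$, $PBP^{-1}=-B$. Your write-up of the final truncation step (adding $B=f_e(A)+f_o(A)$ to $-B=f_e(A)-f_o(A)$ and using $\mathrm{char}\,\mathbb{F}=0$) is in fact slightly more explicit than the paper's.
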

\begin{proof}
	Suppose there exists $P\in GL_n(\mathbb{F})$ such that $AP=-PA$, $BP=-PB$. For every $X\in \mathcal{C}(A)$, $AX=XA$, then $PXA=PAX=-APX$ which implies $PX\in \mathcal{C}\mathcal{L}(A)=\mathcal{C}\mathcal{L}(B)$. This simplifies to $BPX=-PXB=-PBX$, which yields $XB=BX$, equally, $X\in \mathcal{C}(B)$, showing that $\mathcal{C}(A)\subseteq \mathcal{C}(B)$. The converse inclusion is obvious by the previous discussion. Then $\mathcal{C}(A)=\mathcal{C}(B)$ which yields that $A$ and $B$ are polynomial equivalence. Since $B$ is a polynomial of $A$, by Remark \ref{alph*}, write
\begin{equation}\label{pppp}B=\sum_{i= 0}^{n-1}a_iA^i\end{equation}
where $a_i\in\mathbb{F}$, $0\leq i\leq n-1$. As there exists $K\in GL_n(\mathbb{F})$ such that $K\in \mathcal{C}\mathcal{L}(A)=\mathcal{C}\mathcal{L}(B)$ which yields $KB=-BK$, $KA=-KA$, we conclude that $$\sum_{i=0}^{p}a_{2i}A^{2i}=O$$where $p = \lfloor \frac{n-1}{2} \rfloor $. Hence only the odd terms survive in the equation \eqref{pppp} which yields that $A$ and $B$ are odd polynomial equivalence.
\end{proof}

\begin{remark}
The condition that ``$\mathcal{C}\mathcal{L}(A)$ contains an invertible matrix'' in Theorem~\ref{clcl} is necessary and cannot be omitted. 
Consider 
$A=\begin{bmatrix}
1&&\\
&\tfrac{2}{3}&\\
&&\tfrac{1}{2}
\end{bmatrix}$ and $B=I_3$. 
Then $\mathcal{C}\mathcal{L}(A)=\mathcal{C}\mathcal{L}(B)=O$. 
However, $A$ and $B$ are not polynomial equivalence, and certainly not odd polynomial equivalence.
\end{remark}
\begin{remark}
	The inverse of Theorem \ref{clcl} does not hold. One can easily construct such examples, such as 
	$A=\begin{bmatrix}
		1 & \\
		& 2
	\end{bmatrix}$ and 
	$B=\begin{bmatrix}
		3 & \\
		& 4
	\end{bmatrix}$. 
	In this case, $\mathcal{C}\mathcal{L}(A) = \mathcal{C}\mathcal{L}(B) = O$, which does not contain any invertible matrix. However, 
	$$A = \frac{5}{42}B + \frac{1}{42}B^3,~B = \frac{10}{3}A - \frac{1}{3}A^3.$$
\end{remark}
\par From Theorem \ref{4.7} and Theorem \ref{clcl}, the following theorem can be shown directly. 
\begin{theorem}
	\label{4.2}
	Let $A$, $B\in M_n(\mathbb{F})$ are balanced matrices (including nilpotent matrices). Then $\mathcal{C}\mathcal{L}(A)=\mathcal{C}\mathcal{L}(B)$ if and only if $A$ and $B$ are odd polynomial equivalence.
\end{theorem}
	\begin{example}
    Let
    $
    A = \begin{bmatrix}
        1 & 1 &   &   \\
          & 1 &   &   \\
          &   & -1 &   \\
          &   &    & -1
    \end{bmatrix}$, 
    $B = \begin{bmatrix}
        2 & 1 &   &   \\
          & 2 &   &   \\
          &   & -2 &   \\
          &   &    & -2
    \end{bmatrix}$ which are both balanced matrices. Then $\mathcal{C}\mathcal{L}(A) = \mathcal{C}\mathcal{L}(B)$ and $\mathcal{C}(A) = \mathcal{C}(B)$. Moreover, we have the relations $$A = \tfrac{1}{8}B^2 + \tfrac{1}{2}B - \tfrac{1}{2}I,~B = -\tfrac{1}{2}A^2 + 2A + \tfrac{1}{2}I,~A = \tfrac{1}{4}B + \tfrac{1}{16}B^3,~B = \tfrac{5}{2}A - \tfrac{1}{2}A^3.$$ Therefore, $A$ and $B$ are polynomial equivalence, and even odd polynomial equivalence.
\end{example}
\par By Theorem \ref{4.2}, we have the following corollary.
\begin{corollary}
	Let $A, B \in M_n(\mathbb{F})$. If $\mathcal{C}\mathcal{L}(A) = \mathcal{C}\mathcal{L}(B)$ and $A$ has a zero eigenvalue, then $B$ must also have a zero eigenvalue.
\end{corollary}
\par Unfortunately, Theorem~\ref{4.2} cannot be extended beyond the balanced case, since the clifforder fails to retain sufficient information about a general matrix, and balanced matrices themselves are relatively rare. One might naturally ask whether such an extension is possible in light of Theorem~\ref{alph*2}; however, as we shall explain in the following theorem, such an extension simply makes no sense.
\begin{theorem}
	Let $A = J_n(0)$, $B \in M_n(\mathbb{F})$, and $k \ge 2$ be an integer. If any solution $X$ of $\sum_{i=0}^{k} C_k^i A^{k-i} X A^{i} = O$ also satisfies $\sum_{i=0}^{k} C_k^i X^{k-i} B X^{i} = O$, then $B = O$.
\end{theorem}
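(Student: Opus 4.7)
The plan is to exhibit two simple diagonal matrices in the set $S := \{X \in M_n(\mathbb{F}) : \sum_{i=0}^{k} \binom{k}{i} A^{k-i} X A^{i} = O\}$ (which is the kernel of the commuting operator $(L_A+R_A)^k$, where $L_A(Y)=AY$ and $R_A(Y)=YA$), and to check that the two resulting constraints already force $B=O$. The driving observation is that if $X=\operatorname{diag}(x_1,\ldots,x_n)$ is diagonal, a direct expansion gives
$$\Bigl(\sum_{i=0}^{k}\binom{k}{i} X^{k-i} B X^{i}\Bigr)_{pq} = B_{pq}(x_p+x_q)^k.$$
Consequently, if such a diagonal $X$ lies in $S$, the hypothesis forces $B_{pq}=0$ for every pair $(p,q)$ with $x_p+x_q\neq 0$, using $\operatorname{char}\mathbb{F}=0$.

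The two test matrices I would use are $X_0=\operatorname{diag}((-1)^j)_{j=1}^n$ and $X_1=\operatorname{diag}((-1)^j j)_{j=1}^n$. For any diagonal $X$ and $A=J_n(0)$, a short computation gives $AX+XA=\sum_{j=1}^{n-1}(x_j+x_{j+1})\,E_{j,j+1}$, where $E_{j,j+1}$ is a matrix unit. For $X_0$ the superdiagonal entries $(-1)^j+(-1)^{j+1}$ vanish identically, hence $X_0\in \mathcal{C}\mathcal{L}(A)\subseteq S$. For $X_1$ one instead obtains $AX_1+X_1 A=\sum_{j=1}^{n-1}(-1)^{j+1} E_{j,j+1}$, and applying $L_A+R_A$ once more (using $(L_A+R_A)(E_{j,j+1})=E_{j-1,j+1}+E_{j,j+2}$) leads to a pairwise cancellation $(-1)^r+(-1)^{r+1}=0$ at each matrix unit $E_{r,r+2}$, showing $(L_A+R_A)^2(X_1)=O$ and hence $X_1\in S$, precisely because $k\ge 2$.

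The two constraints on $B$ then read $B_{pq}\bigl((-1)^p+(-1)^q\bigr)^{k}=0$ and $B_{pq}\bigl((-1)^p p+(-1)^q q\bigr)^{k}=0$. The first forces $B_{pq}=0$ whenever $p,q$ have the same parity, in particular killing every diagonal entry. The second disposes of the remaining case of opposite parities: there $(-1)^p p+(-1)^q q=\pm(p-q)\neq 0$ since $p\neq q$ and $\operatorname{char}\mathbb{F}=0$. Together the two parity cases cover every $(p,q)$, giving $B=O$. The only nontrivial step I anticipate is finding the correct test family; once one recognises that $L_A+R_A$ restricted to diagonal matrices acts as the finite-difference operator $I+E$ (where $E$ is the forward shift on indices), the general solutions of $(I+E)^k x=0$ take the form $x_j=(-1)^j p(j)$ with $\deg p<k$, and the two simplest choices $p(j)=1$ and $p(j)=j$ already suffice uniformly for all $k\geq 2$.
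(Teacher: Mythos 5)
Your proposal is correct and follows essentially the same route as the paper: exhibit diagonal matrices in $\mathrm{Ker}\,(L_A+R_A)^k$ and use the entrywise identity $B_{pq}(x_p+x_q)^k=0$. The paper gets by with the single test matrix $D=\operatorname{diag}(1,-2,\ldots,(-1)^{n-1}n)$ (the negative of your $X_1$), observing that $(-1)^{s-1}s+(-1)^{t-1}t$ never vanishes, so your auxiliary matrix $X_0$ and the parity case split are not actually needed.
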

\begin{proof}
	Let $D = \operatorname{diag}(1,-2,\cdots,(-1)^{n-1}n)$. Since $AX+XA=-K_n^{(2)}$ and $K_n^{(2)}\in\mathcal{C}\mathcal{L}(A)$, it follows that $D$ satisfies $\sum_{i=0}^{k}C_k^iA^{k-i}DA^{i}=O$, hence $\sum_{i=0}^{k}C_k^iD^{k-i}BD^{i}=O$. Writing $B=(b_{ij})_{n\times n}$, for each $1\leq s,t\leq n$ we obtain $b_{st}\sum_{i=0}^{k}C_k^i[(-1)^{s-1}s]^{k-i}[(-1)^{t-1}t]^i=0$, which implies $b_{st}[(-1)^{s-1}s+(-1)^{t-1}t]^k=0$, and thus $b_{st}=0$ for all $s,t$, so $B=O$.
\end{proof}

\section{$\omega$-equivalence for Quasi-commutative Matrices}\label{quasicommutative}
\par In this section, we investigate the relationship between matrices $A$ and $B$ through their $\omega$-centralizers. We begin by recalling a classical theorem of Potter \cite{Potter} on quasi-commutative matrices. \par Assuming $AB=\omega BA$ and writing $(A+B)^k=\sum_{r=0}^k c_r^{(k)} B^rA^{k-r}$, Potter showed that $c_r^{(k)}=\frac{\phi_k}{\phi_r\phi_{k-r}}$, where $\phi_r=\prod_{s=1}^r(\omega^s-1)$. In this section, we provide a new and self contained proof of the associated result, which is independent of both Potter's original argument and the approach in \cite{Holtz}. Moreover, we derive an alternative expression for the coefficients $c_r^{(k)}$, from which the main theorem stated below follows as a direct corollary. In our proof, we only use some basic properties of $q$-binomial coefficients (see \cite[Chapters~1 and~2]{Johnson}, for example). We also present several examples, further properties, and discuss some noteworthy implications of these relations.

Suppose that $(A+B)^k =A^k+ \sum_{r=1}^{k}c_r^{(k)}B^rA^{k-r}$, where $r$ denotes the number of occurrences of $B$ in a term, and $c_r^{(k)}$ is the scalar obtained by collecting all $\omega$ factors arising from the commutations required to move all copies of $B$ to the left. For each term with $B$ occurring at positions $1 \le i_1 < \cdots < i_r \le k$, consider the $j$-th occurrence at position $i_j$. Since exactly $j-1$ copies of $A$ precede it, this $B$ must commute past all preceding $A$'s in that term in order to move to the left. The number of such $A$'s is $(i_j-1)-(j-1)=i_j-j$, and each commutation contributes a factor of $\omega$. Hence the coefficient of a term in which $B$ occurs at positions $i_1,\ldots,i_r$ is $\prod_{j=1}^r \omega^{i_j-j}=\omega^{-\frac{r(r+1)}{2}+\sum_{j=1}^r i_j}$. Summing over all choices of $1\leq i_1<\cdots<i_r\leq k$ gives $c_r^{(k)}=\omega^{-\frac{r(r+1)}{2}}\sum_{1\le i_1<\cdots<i_r\le k}\omega^{\sum_{j=1}^r i_j}$.

\begin{theorem}
	\label{Potter}
	Let $A$ and $B$ be complex square matrices satisfying $AB = \omega BA$, where $\omega$ is a primitive $q$-th root of unity. Then $(A + B)^q = A^q + B^q.$
\end{theorem}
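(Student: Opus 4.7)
The plan is to establish a quantum binomial formula for $(A+B)^n$ under the quasi-commutative relation $AB = \omega BA$, and then to evaluate it at $n=q$ by observing that the relevant Gaussian binomial coefficients vanish at a primitive $q$-th root of unity.

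First I would record the basic commutation lemma: iterating $AB = \omega BA$ gives $B^{k}A^{j} = \omega^{-jk} A^{j} B^{k}$ for all $j,k \ge 0$. With this in hand, I would prove by induction on $n$ the quantum binomial theorem
\[
(A+B)^{n} \;=\; \sum_{k=0}^{n}\binom{n}{k}_{\omega^{-1}} A^{\,n-k}B^{k},
\]
where $[m]_{t} := 1 + t + \cdots + t^{\,m-1}$ and $\binom{n}{k}_{t} := [n]_{t}!/([k]_{t}!\,[n-k]_{t}!)$. The inductive step writes $(A+B)^{n+1} = (A+B)^{n}(A+B)$, uses the commutation lemma to push a trailing factor of $A$ past each $B^{k}$ (picking up $\omega^{-k}$), and then combines like terms via the $q$-Pascal identity $\binom{n+1}{k}_{t} = t^{k}\binom{n}{k}_{t} + \binom{n}{k-1}_{t}$.

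Finally, specializing to $n = q$ and $t = \omega^{-1}$, I would show that $\binom{q}{k}_{\omega^{-1}} = 0$ for every $1 \le k \le q-1$. Indeed, in the product representation
\[
\binom{q}{k}_{\omega^{-1}} \;=\; \prod_{i=1}^{k} \frac{1 - \omega^{-(q-k+i)}}{1 - \omega^{-i}},
\]
the factor corresponding to $i = k$ has numerator $1 - \omega^{-q} = 0$, while every denominator factor $1 - \omega^{-i}$ is nonzero because $\omega$ is primitive of order exactly $q$. Substituting back collapses the quantum binomial expansion to its two extreme terms, yielding $(A+B)^{q} = A^{q} + B^{q}$. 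The only genuinely delicate point, rather than a conceptual obstacle, is the careful bookkeeping of $\omega$-powers in the inductive step and the verification of the precise form of the $q$-Pascal identity used; the rest reduces to standard manipulations in the quantum plane.
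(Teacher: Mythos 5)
Your proposal is correct. It takes the standard route via the quantum binomial theorem: prove $(A+B)^n=\sum_k\binom{n}{k}_{\omega^{-1}}A^{n-k}B^k$ by induction using the $q$-Pascal identity, then kill the Gaussian binomial coefficients $\binom{q}{k}_{\omega^{-1}}$ for $1\le k\le q-1$ via the product formula, whose $i=k$ numerator factor $1-\omega^{-q}$ vanishes while all denominators $1-\omega^{-i}$, $1\le i\le q-1$, are nonzero by primitivity. The paper instead expands $(A+B)^q$ directly as a sum over all words in $A,B$, normalizes each word to $C_kA^kB^{q-k}$ by counting the $\omega$-factors picked up at each position set $i_1<\cdots<i_k$, and recognizes the resulting coefficient $D_k=\sum\omega^{-\sum i_j}$ as the coefficient of $t^k$ in $\prod_{j=1}^{q}(1+\omega^{-j}t)=1-(-t)^q$, which vanishes for $1\le k\le q-1$ because the roots of that product are exactly all $q$-th roots of unity. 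The two arguments rest on the same underlying identity (the paper's $D_k$ is, up to a power of $\omega$, your Gaussian binomial), but the machinery differs: the paper's version is a single direct enumeration that avoids introducing $q$-binomial coefficients, the $q$-Pascal identity, and the induction, whereas yours is more modular and yields the full $q$-binomial expansion of $(A+B)^n$ for every $n$ as a byproduct. Either is acceptable; just be sure in your write-up to fix the convention ($BA=\omega^{-1}AB$, so $t=\omega^{-1}$ with $A$ on the left) consistently, since that is where sign/exponent bookkeeping errors typically creep in.
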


\begin{proof}
Denote $d_r^{(q)}:=\sum_{1\le i_1<\cdots<i_r\le q}\omega^{\sum_{j=1}^r i_j}$. Observe that $d_r^{(q)}$ is precisely the coefficient of $t^r$ in the polynomial $P(t)=\prod_{j=1}^q (1+\omega^{j}t)$. Since $\omega$ is a primitive $q$-th root of unity, one has $P(t)=1-(-t)^q$. Therefore $d_r^{(q)}=0$, and hence $c_r^{(q)}=0$, for $1\le r\le q-1$.
\end{proof}
\par The following theorem is already known (\cite{Holtz}, see Proposition~4).  
Nevertheless, we present an alternative perspective to derive this result.  
Observe that for any $s,t \in \mathbb{F}$, if $A,B \in M_n(\mathbb{F})$ satisfy $AB = \omega BA$, then $sA$ and $tB$ also satisfy $(sA)(tB) = \omega (tB)(sA)$.  
Hence, by Theorem~\ref{Potter}, one obtains the following result through a different viewpoint than that adopted in \cite{Holtz}.
\begin{theorem}
Let $A$ and $B$ be quasi-commutative matrices satisfying $AB = \omega BA$, where $\omega$ is a primitive $q$-th root of unity. Then, for all $s, t \in \mathbb{F}$, we have $$(sA + tB)^q = (sA)^q + (tB)^q.$$
\end{theorem}

\par As observed earlier, the case $\omega = 1$ has already been studied, and when $p=2$, we have also examined the case $\omega = -1$. Therefore, in what follows, we restrict our attention to the case $\omega \neq 1, -1$. 

\par Now, we are in the position to deal with the general situation. In what follows, we restrict our attention to the case where $A$ is nilpotent.
\begin{example}
	\label{example6.1}
	Let $A=J_n(0)$ and $\omega\in \C$, then $$\mathcal{C}_\omega(A)=\{\begin{bmatrix}
		x_1&x_2&x_3&\cdots&x_n\\
		&\omega x_1&\omega x_2&\cdots&\omega x_{n-1}\\
		&&\omega^2 x_1&\ddots&\vdots\\
		&&&\ddots&w^{n-2}x_2\\
		&&&&\omega^{n-1}x_1
	\end{bmatrix}\mid x_1,x_2,\cdots,x_n\in \C\}.$$
\end{example}
\par Parts of the following Theorem~\ref{extend1} were originally established in \cite[p.~7, Theorem~11]{Dolinar2019}. Here we further complete the theorem and provide a new elementary proof based on a different approach.
\begin{theorem}
	\label{extend1}
	Let $A,B\in M_k(\mathbb{F})$, where $A$ is a nilpotent matrix of nilpotency index $n$, and let $\omega\in\mathbb{F}\backslash \{-1,0,1\}$. Then
	\begin{enumerate}
		\item If $\omega$ is a primitive $q$-th root of unity, then
		\begin{enumerate}
			\item If $1\leq n\leq q+1$, then 
			\begin{enumerate}
				\item $\mathcal{C}_\omega(A)=\mathcal{C}_\omega(B)$ if and only if there exists a nonzero $c\in\mathbb{F}$ such that $B=cA$; 
				\item There does not exist any $B\in M_k(\mathbb{F})$ such that
$\mathcal{C}_\omega(A)=\mathcal{C}_{\omega^{-1}}(B)$.
			\end{enumerate}
		\item If $n \geq q+2$, then 
		\begin{enumerate}
			\item $\mathcal{C}_\omega(A)=\mathcal{C}_\omega(B)$ holds if and only if $A$ and $B$ are $q$-polynomial equivalence over $\mathbb{F}$; 
			\item $\mathcal{C}_\omega(A)=\mathcal{C}_{\omega^{-1}}(B)$ implies $B\in A^{q-1}\F[A^q]$.
		\end{enumerate}
		\end{enumerate}
		\item If $\omega$ is not a primitive $q$-th root of unity for any $q\in \Z^+$, then \begin{enumerate}
			\item $\mathcal{C}_\omega(A)=\mathcal{C}_\omega(B)$ holds if and only if there exists a nonzero $c\in\mathbb{F}$ such that $B=cA$; 
			\item There does not exist any $B\in M_k(\mathbb{F})$ such that
$\mathcal{C}_\omega(A)=\mathcal{C}_{\omega^{-1}}(B)$.
		\end{enumerate}	\end{enumerate}
\end{theorem}
\begin{proof}
	By applying simultaneous similarity transformations $A\mapsto T^{-1}AT,~B\mapsto T^{-1}BT$, we may assume that $A=\operatorname{diag}(J_{n_1}(0),J_{n_2}(0),\dots,J_{n_{s-1}}(0),J_n(0))$, where $n_1\leq \cdots\leq n_{s-1}\leq n$. \\
	\textit{(1) (a) (i)}: The \textit{if} part is immediate. For the \textit{only if} part, it is easy to see that for $A$, we have $$P = \operatorname{diag}(1,\omega,\dots,\omega^{n_1-1}, 1,\omega,\dots,\omega^{n_2-1}, \dots, 1,\omega,\dots,\omega^{n_s-1}, 1,\omega,\dots,\omega^{n-1}) \in \mathcal{C}_\omega(A).$$
	\par Then it follows that $\mathcal{C}_\omega(A)$ contains an invertible matrix $P$. By an argument analogous to that in Theorem~\ref{clcl}, for every $X \in \mathcal{C}(A)$ we have $PXA = PAX = \omega^{-1}APX$, which shows $PX \in \mathcal{C}_\omega(A)=\mathcal{C}_\omega(B)$.  
This further implies $BPX = \omega PXB$, and using $BP = \omega PB$, we deduce $X \in \mathcal{C}(B)$.  
Hence $\mathcal{C}(A)=\mathcal{C}(B)$, and by Theorem~\ref{alph*} we may express
\begin{equation}\label{q} 
	B=\sum_{i=0}^{n-1} a_i A^i,
\end{equation}
where $a_i \in \mathbb{F}$ for $0 \leq i \leq n-1$.  
As $P \in GL_n(\mathbb{F})$ lies in $\mathcal{C}_\omega(A)=\mathcal{C}_\omega(B)$ and satisfies $BP=\omega PB$ and $AP=\omega PA$, we deduce $\omega PB=BP=\sum_{i=0}^{n-1} a_i A^iP=\sum_{i=0}^{n-1} a_i \omega^i PA^i,$ which leads to 
\begin{equation}\label{omega}
	B=\sum_{i=0}^{n-1} a_i \omega^{i-1} A^i.
\end{equation}
\par Since $A^n=O$, comparing~\eqref{q} and~\eqref{omega} shows that $B = cA$ for some nonzero scalar $c$.\\
\textit{(1) (a) (ii)}: Assume, toward a contradiction, that there exists $B\in M_k(\mathbb{F})$ such that $\mathcal{C}_\omega(A)=\mathcal{C}_{\omega^{-1}}(B)$. By the discussion above, choose the same invertible matrix $P\in \mathcal{C}_\omega(A)=\mathcal{C}_{\omega^{-1}}(B)$. Then for every $X\in\mathcal{C}(A)$, we have $PXA=PAX=\omega^{-1}APX$, which shows that $PX\in\mathcal{C}_\omega(A)=\mathcal{C}_{\omega^{-1}}(B)$. Hence $BPX=\omega^{-1}PXB.$ Since $BP=\omega^{-1}PB$, it follows that $BPX=\omega^{-1}PBX=\omega^{-1}PXB,$ and therefore $PBX=PXB.$ As $P$ is invertible, we obtain $BX=XB,$ that is, $X\in\mathcal{C}(B)$. Hence $\mathcal{C}(A)=\mathcal{C}(B)$. Therefore, by Theorem~\ref{alph*}, we obtain the matrix equation~\eqref{q}. Since $P\in GL_k(\mathbb{F})$ lies in
$\mathcal{C}_\omega(A)=\mathcal{C}_{\omega^{-1}}(B)$, we have $BP=\omega^{-1}PB$
and $AP=\omega PA$. Using \eqref{q}, we obtain $\omega^{-1}PB=BP=\sum_{i=0}^{n-1}a_iA^iP=\sum_{i=0}^{n-1}a_i\omega^iPA^i,$ and hence
\begin{equation}\label{omega1}
B=\sum_{i=0}^{n-1}a_i\omega^{i+1}A^i.
\end{equation}

Since $A^n=O$, comparing~\eqref{q} with~\eqref{omega1} yields $a_i=a_i\omega^{i+1}$ for all $0\le i\le n-1$. If $n\le q-1$, then $\omega^{i+1}\neq1$ for all $0\le i\le n-1$, and hence $a_i=0$ for all $i$. Thus $B=O$, a contradiction. Therefore, we must have $n=q$ or $n=q+1$. In both cases, we obtain $B=a_{q-1}A^{q-1}$. This contradicts $\mathcal{C}_\omega(A)=\mathcal{C}_{\omega^{-1}}(B)$.
\\
\textit{(1) (b) (i)}: This case follows directly from \textit{(1) (a) (i)} by a similar argument.\\
\textit{(1) (b) (ii)}: It suffices to compare \eqref{q} with \eqref{omega1}, since they are exactly the same as proved before. We note that, suppose $B=A^{q-1}\sum_{i\geq 0} a_iA^{qi}$. Then for every $X\in \mathcal{C}_\omega(A)$, one has $AX=\omega XA$. Hence, $BX=\sum_{i\geq 0} a_iA^{qi+q-1}X
= \sum_{i\geq 0} a_i\omega^{qi+q-1}XA^{qi+q-1}
= \sum_{i\geq 0} a_i\omega^{-1}XA^{qi+q-1}
= \omega^{-1}XB$. Therefore, $\mathcal{C}_\omega(A)\subseteq \mathcal{C}_{\omega^{-1}}(B).$ \par Finally, \textit{(2)(a)} and \textit{(2)(b)} can be obtained easily by the approach we introduced before.	\end{proof}

\begin{example}
	Suppose $n=5$ and $\omega$ is a primitive $3$-rd root of unity.  
	Let $A = J_5(0)$ and $B \in M_5(\mathbb{F})$ satisfy $\mathcal{C}_\omega(A) = \mathcal{C}_\omega(B)$.  
	Then one can verify that 
	\begin{equation}
		\label{keyD}
		D_1 = \begin{bmatrix}
			1&&&&\\
			&\omega&&&\\
			&&\omega^2&&\\
			&&&1&\\
			&&&&\omega
		\end{bmatrix},~D_2 = \begin{bmatrix}
			0&1&&&\\
			&0&\omega&&\\
			&&0&\omega^2&\\
			&&&0&1\\
			&&&&0
		\end{bmatrix} \in \mathcal{C}_\omega(A)=\mathcal{C}_\omega(B).
	\end{equation}
	From \eqref{keyD}, if we write $B = (b_{ij})_{5 \times 5}$, then there exist scalars $b, c \in \mathbb{F}$ such that $B = bA + cA^4$.
\end{example}
\par Finally, from \cite[p. 661, Theorem 5]{Holtz}, we deduce the following result, and omit the proof.
\begin{theorem}
Let $\omega$ be a primitive $p$-th root of unity and let $A\in M_n(\mathbb{C})$. Then $\mathcal{C}_\omega(A)$ contains an invertible matrix if and only if the Jordan canonical form of $A$ can be written as
$$
J=\operatorname{diag}\{J_{n_1}(0),\ldots,J_{n_s}(0),K_1,\ldots,K_t\},
$$
where $s,t\ge 0$, and for each $1\le i\le t$, the summand $K_i$ is a direct sum of Jordan blocks corresponding to the nonzero eigenvalues 	$\lambda_i,\omega\lambda_i,\ldots,\omega^{p-1}\lambda_i$, such that the Jordan blocks associated with these eigenvalues have identical sizes and identical multiplicities.
\end{theorem}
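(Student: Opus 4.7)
The plan is to reduce both directions to the simple observation that $X\in\mathcal{C}_\omega(A)\cap GL_n(\mathbb{C})$ is the same as $A=X^{-1}(\omega A)X$, so $\mathcal{C}_\omega(A)$ contains an invertible matrix if and only if $A$ and $\omega A$ are similar.

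For the forward direction, I would compare the Jordan canonical forms of $A$ and $\omega A$. The key ingredient is the elementary similarity $\omega J_m(\mu)\sim J_m(\omega\mu)$, realized by conjugation with $\operatorname{diag}(1,\omega,\ldots,\omega^{m-1})$; it shows that the Jordan data of $\omega A$ are obtained from those of $A$ via the bijection $(m,\mu)\mapsto(m,\omega\mu)$. Hence $A\sim\omega A$ forces the multiset of Jordan data of $A$ to be invariant under $\mu\mapsto\omega\mu$ while preserving block sizes. Since $\omega$ has multiplicative order exactly $p$, its action on $\mathbb{C}^{\ast}$ has all orbits of length $p$, so the nonzero Jordan blocks of $A$ partition into complete $\omega$-orbits $\{\lambda_i,\omega\lambda_i,\ldots,\omega^{p-1}\lambda_i\}$, each contributing a summand $K_i$ of the stated form, while the zero eigenvalue produces the nilpotent part $J_{n_1}(0)\oplus\cdots\oplus J_{n_s}(0)$.

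For the converse, using $P^{-1}\mathcal{C}_\omega(A)P=\mathcal{C}_\omega(P^{-1}AP)$ (a direct parallel of Lemma \ref{conjugate}) I would pass to the Jordan form and construct an invertible element of $\mathcal{C}_\omega(J)$ block by block. For each nilpotent summand $J_n(0)$, the invertible diagonal matrix $D_n:=\operatorname{diag}(1,\omega,\ldots,\omega^{n-1})$ satisfies $J_n(0)D_n=\omega D_n J_n(0)$ by a direct super-diagonal check. For each non-nilpotent summand $K_i$, arranged as $p$ equal-size super-blocks indexed cyclically by the orbit elements $\omega^{k-1}\lambda_i$, I would take the block-cyclic shift whose only nonzero entries are copies of $D_{m_i}$ (tensored with an identity of appropriate size to absorb the multiplicity within each eigenvalue) placed in positions $(a,b)$ with $a\equiv b+1\pmod p$. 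Blockwise, the required relation reduces to $J_{m_i}(\omega^{a-1}\lambda_i)D_{m_i}=\omega D_{m_i}J_{m_i}(\omega^{b-1}\lambda_i)$ with $a\equiv b+1\pmod p$, which follows from the nilpotent identity above together with $\omega\cdot\omega^{b-1}=\omega^{a-1}$. Direct summing these ingredients yields the desired invertible element of $\mathcal{C}_\omega(J)$.

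The main obstacle, and essentially the only non-mechanical step, is pinning down the similarity $\omega J_m(\mu)\sim J_m(\omega\mu)$, which underpins both the orbit-counting argument in the forward direction and the explicit cyclic-shift construction in the backward direction. Once this is in hand, the rest is orbit bookkeeping together with a direct block computation.
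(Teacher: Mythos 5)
Your argument is correct. Note that the paper itself gives no proof of this theorem --- it explicitly defers to Theorem~5 of the Holtz--Mehrmann--Schneider reference --- so there is no in-paper argument to compare against; your write-up supplies a valid self-contained proof. The reduction is exactly right: an invertible $X$ with $AX=\omega XA$ is the same as $X^{-1}AX=\omega A$, so the question is whether $A\sim\omega A$. Your two key computations both check out: $D_m^{-1}J_m(\omega\mu)D_m=\omega\mu I+\omega N=\omega J_m(\mu)$ with $D_m=\operatorname{diag}(1,\omega,\ldots,\omega^{m-1})$, which identifies the Jordan data of $\omega A$ as the image of that of $A$ under $(m,\mu)\mapsto(m,\omega\mu)$; and since $\omega$ has exact order $p$, every orbit in $\mathbb{C}^{*}$ has length $p$, forcing the stated orbit structure on the nonzero blocks (the zero eigenvalue is a fixed point, hence unconstrained, giving $s\geq 0$ arbitrary). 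For the converse, the blockwise verification $J_m(\omega^{a-1}\lambda)D_m=\omega D_mJ_m(\omega^{b-1}\lambda)$ for $a\equiv b+1\pmod p$ reduces, after cancelling the scalar parts, to $N D_m=\omega D_m N$, which is the nilpotent case already checked; the resulting block-cyclic matrix with invertible diagonal blocks $D_m$ (padded by identities for multiplicities) is invertible. This is also the natural route one would extract from the cited source, so nothing is missing.
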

\section{Conclusion\label{conclusion}}
\par In this paper, we investigated the interplay among polynomial equivalence, centralizers, clifforders, and quasi-commutativity of matrices, introducing several new concepts including polynomial equivalence, odd polynomial equivalence, $q$-polynomial equivalence, balanced matrices and $\omega$-equivalence. We extend the classical double centralizer theorem to a broader framework, providing a more general setting in which it can be formulated. In this context, we also present four equivalent statements concerning matrix centralizers, which allow for direct comparison between the centralizers of different matrices. Subsequently, we investigate anti-commuting matrices through the notion of clifforders, showing that for balanced matrices, their clifforders coincide precisely when they are odd polynomial equivalence. In the final section, we first present a new proof of the classical Potter’s theorem and then extend the analysis to $\omega$-equivalence, showing that for a nilpotent matrix $A$ and a matrix $B$, $\mathcal{C}_\omega(A)=\mathcal{C}_\omega(B)$ holds if and only if $A$ and $B$ are $q$-polynomial equivalence. 
\par These results offer a unified perspective on polynomial transformations of matrices and their influence on structural properties such as commutativity, anti-commutativity, and other relations. Future directions may include generalizing polynomial equivalence to non-associative or Lie algebras and further investigating the connections among matrix centralizers, clifforders, and $\omega$-centralizers. The results provide a foundation for deeper study of matrix transformations and their algebraic structures in linear algebra and related areas.

\end{document}